\documentclass[12pt,a4paper]{amsart}
\usepackage{amsmath, amssymb, amsfonts,enumerate}
\usepackage[all]{xy}
\usepackage{amscd}

\newtheorem{theorem}{Theorem}[section]
\newtheorem{lemma}[theorem]{Lemma}
\newtheorem{corollary}[theorem]{Corollary}
\newtheorem{proposition}[theorem]{Proposition}

 \theoremstyle{definition}
 \newtheorem{definition}[theorem]{Definition}
 \newtheorem{remark}[theorem]{Remark}

 \newtheorem{example}[theorem]{Example}

\newtheorem{question}{Question}

\numberwithin{equation}{section}
\newcommand {\N}{\mathbb{N}} 
\newcommand {\Z}{\mathbb{Z}} 
\newcommand {\Q}{\mathbb{Q}} 


\newcommand{\GG}{\mathcal{G}}

\newcommand{\OO}{\mathcal{O}}
\newcommand{\PP}{\mathcal{P}}

\DeclareMathOperator{\Ker}{Ker}

\DeclareMathOperator{\Stab}{Stab}
\DeclareMathOperator{\Aut}{Aut}

\begin{document}
\title[On the density of periodic configurations]{On the density of periodic configurations in strongly irreducible subshifts}
\date{\today}
\author{Tullio Ceccherini-Silberstein}
\address{Dipartimento di Ingegneria, Universit\`a del Sannio, C.so
Garibaldi 107, 82100 Benevento, Italy}
\email{tceccher@mat.uniroma1.it}
\author{Michel Coornaert}
\address{Institut de Recherche Math\'ematique Avanc\'ee,
UMR 7501,                                             Universit\'e  de Strasbourg et CNRS,
                                                 7 rue Ren\'e-Descartes,
                                               67000 Strasbourg, France}
\email{coornaert@math.unistra.fr}
\subjclass[2000]{37B10,37B15, 37B50, 68Q80}
\keywords{symbolic dynamics, shift, subshift, density of periodic points, strongly irreducible subshift, subshift of finite type, residually finite group }
\date{\today}
\begin{abstract}
Let $G$ be a residually finite group and let $A$ be a finite set.
We prove that if $X \subset A^G$ is a strongly irreducible subshift of finite type 
containing a periodic configuration then periodic configurations are dense in $X$.
The density of periodic configurations  implies in particular that every injective endomorphism of $X$ is surjective and that the group of automorphisms of $X$ is residually finite.
We also introduce a class of subshifts $X \subset A^\Z$, including all strongly irreducible subshifts and all irreducible sofic subshifts, in which periodic configurations are dense. 
    \end{abstract}
\maketitle

\section{Introduction}

Consider a dynamical system $(X,G)$ consisting of a compact Hausdorff topological space $X$ equipped with a continuous action of a group $G$.
A point $x \in X$ is called \emph{periodic} if its orbit  is finite.
 There are many classical examples of ``chaotic" dynamical systems
(e.g.,   the system generated by  Arnold's cat map on the $2$-torus or, more generally, by any Anosov diffeomorphism on the $n$-torus) in which periodic points are dense.
In fact, density of periodic points often appears as one of the axioms in the various definitions of chaos that may be found in the mathematical literature. 
This is the case in Devaney's definition of chaos, which requires density of periodic points together with topological transitivity 
(cf.   \cite{devaney}, \cite{kontorovich}, \cite{cc-chaos}, and the references therein).
When $X$ is metrizable, the density of periodic points in $(X,G)$ has an important ergodic consequence, namely  the existence of a Borel invariant probability measure on $X$ with full support. 
Indeed, if $(\OO_n)_{n \geq 1}$   is a sequence of disjoint finite  orbits whose union  is dense in $X$
 and $(a_n)_{n \geq 1}$ are positive real numbers satisfying $\sum_{n \geq 1} a_n   = 1$,
then 
$$
\mu = \sum_{n \geq 1} \frac{a_n}{\vert \OO_n \vert}\left(\sum_{x \in \OO_n} \delta_x\right)
$$
is an invariant Borel probability measure  whose support is $X$  
(we use  $\delta_x$ to denote the Dirac measure at $x$ and
$\vert \cdot \vert$ to denote cardinality of finite sets). 
\par
 In this   paper, we investigate the question of the density of periodic points for certain symbolic dynamical systems. 
Before stating our main results, let us briefly  recall some basic definitions from symbolic dynamics.
\par
Let $A$ be a finite set, called the \emph{alphabet}, and $G$ a group.
Here we do not make the assumption that $G$ is finitely generated nor even that it is countable.
 The set $A^G := \{x \colon G \to A\}$,
equipped   with its \emph{prodiscrete topology}, i.e., the product topology obtained by taking the discrete topology on each factor $A$ of $A^G$, is  a totally disconnected compact Hausdorff space, 
called the space of \emph{configurations}.
It is metrizable unless $G$ is uncountable and $A$ contains more than one element.
    The group $G$  acts on $A^G$ by the $G$-\emph{shift}, which is defined by the formula
$gx(h) := x(g^{-1}h)$ for all $g,h \in G$ and $x \in A^G$.
A configuration $x \in A^G$ is called \emph{periodic} if it is periodic with respect to the $G$-shift.
\par
A closed $G$-invariant subset of $A^G$ is called a \emph{subshift}.
Let $X \subset A^G$ be a subshift. A map $\tau \colon X \to X$ is called a \emph{cellular automaton} (or an \emph{endomorphism} of $X$)  if $\tau$ is continuous (with respect to the prodiscrete topology)  and $G$-equivariant (i.e., such that $\tau(gx) = g \tau(x)$ for all $(g,x) \in G \times X$).
 One says that the subshift $X$ is \emph{surjunctive} if every injective cellular automaton $\tau \colon X \to X$   is surjective
 (cf. \cite{gottschalk}, \cite{gromov-esav}, \cite{fiorenzi-periodic},
\cite{ca-and-groups-springer}).
The set consisting of all bijective cellular automata $\tau \colon X \to X$ is a group for composition of maps. 
This group is called the \emph{automorphism group} of $X$ and will be denoted by $\Aut(X)$.  
Recall that a group is called \emph{residually finite} if the intersection of its subgroups of finite index is reduced to the identity element.
The class of residually finite groups includes all virtually polycyclic groups (and in particular all finitely generated nilpotent groups and hence all finitely generated abelian groups), all free groups, and all finitely generated linear groups. 
\par
It is a well known fact
(see Proposition  \ref{p:properties-density} below)
that if  $X \subset A^G$ is a subshift containing a dense set of periodic configurations then:
(i) $X$ is surjunctive,
(ii) the group $\Aut(X)$ is residually finite, and 
(iii) if the action of $G$ on $X$ is faithful then  the group $G$ itself is residually finite.
 \par
   One says that a subshift $X \subset A^G$ is \emph{of finite type} if there exist a finite subset 
$\Omega \subset G$ and a subset $\PP \subset A^\Omega$ such 
that $X$ consists of all  $x \in A^G$ for which the restriction of the configuration $g x$ to 
$\Omega$ belongs to $\PP$ for all $g \in G$.
One says that a subshift $X \subset A^G$ is \emph{strongly irreducible} if there exists a finite subset  $\Delta \subset G$  such that,   
if $\Omega_1$ and $\Omega_2$ are finite subsets of $G$ which are sufficiently far apart in the sense  that the sets 
$\Omega_1$ and $\Omega_2\Delta$ do not meet, 
   then, given any two configurations $x_1$ and $x_2$ in $X$, there exists a configuration $x \in X$ which coincides with $x_1$ on $\Omega_1$ and with $x_2$ on $\Omega_2$.  
\par
We shall establish  the following result.

 \begin{theorem}
\label{t:density-periodic}
Let $G$ be a residually finite group  and let $A$ be a finite set.
Suppose that $X \subset A^G$ is a strongly irreducible subshift of finite type
and that there exists a periodic configuration in $X$.
Then $X$   contains a dense set of periodic configurations.
\end{theorem}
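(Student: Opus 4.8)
The plan is to prove the following reformulation of the conclusion: for every $y \in X$ and every finite subset $E \subseteq G$ with $1 \in E$, there is a periodic configuration $z \in X$ with $z|_E = y|_E$. Since the sets $\{x \in X : x|_E = y|_E\}$, with $E$ finite, form a basis of neighbourhoods of $y$ in $X$, this is exactly the density of periodic configurations. I fix once and for all a finite subset $\Omega \subseteq G$ and a set $\PP \subseteq A^\Omega$ realizing the finite type condition, a finite subset $\Delta \subseteq G$ witnessing strong irreducibility, and enlarge these if necessary so that $1 \in \Omega = \Omega^{-1}$ and $1 \in \Delta$; neither property is affected. Finally, using the hypothesis, I fix a periodic configuration $x_0 \in X$ and set $N_0 := \Stab(x_0)$. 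Then $N_0$ is a subgroup of $G$ of finite index (its index equals the number of elements in the $G$-orbit of $x_0$), and $x_0(ng) = x_0(g)$ for all $n \in N_0$ and $g \in G$.

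Now fix $y$ and $E$, put $F := E$, and set $B := F\Delta^{-1}\Omega^{-1}\Omega$, a finite subset of $G$ with $1 \in F \subseteq F\Delta^{-1} \subseteq B$. The first step is to extract, using residual finiteness of $G$, a finite-index subgroup $N \leq N_0$ with $N \cap BB^{-1} = \{1\}$: for each of the finitely many $s \in BB^{-1} \setminus \{1\}$ one chooses a finite-index subgroup of $G$ not containing $s$, and lets $N$ be the intersection of these with $N_0$. Then the translates $nB$, $n \in N$, are pairwise disjoint, so $B$ lies in a complete set of representatives $D$ for the right cosets of $N$, i.e.\ $G = \bigsqcup_{n \in N} nD$ with $B \subseteq D$ (and $1 \in D$). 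The second step is an application of strong irreducibility to the finite sets $\Omega_1 := F$ and $\Omega_2 := D \setminus F\Delta^{-1}$: these satisfy $\Omega_1 \cap \Omega_2\Delta = \varnothing$ (if $f = d\delta$ with $d \in \Omega_2$, $\delta \in \Delta$, then $d = f\delta^{-1} \in F\Delta^{-1}$, a contradiction), so there is $\xi \in X$ with $\xi|_F = y|_F$ and $\xi|_{\Omega_2} = x_0|_{\Omega_2}$. Finally I define $z \in A^G$ by $z(g) := \xi(d_g)$, where $g = n_g d_g$ is the unique factorization with $n_g \in N$, $d_g \in D$.

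It remains to check that $z$ is periodic, that $z|_E = y|_E$, and that $z \in X$. The first is immediate: $z$ is invariant under left translation by $N$ (since $d_{mg} = d_g$ for $m \in N$), so $\Stab(z) \supseteq N$ and the orbit of $z$ is finite. The second follows since $E = F \subseteq D$, whence $d_g = g$ and $z(g) = \xi(g) = y(g)$ for $g \in E$. For the third, recall that $z \in X$ iff for every $h \in G$ the pattern $p_h \in A^\Omega$ given by $p_h(\omega) = z(h\omega)$ lies in $\PP$. If $h\Omega$ is contained in a single tile $nD$, then $d_{h\omega} = n^{-1}h\omega$ for all $\omega$, so $p_h(\omega) = \xi(n^{-1}h\omega)$ and $p_h \in \PP$ because $\xi \in X$. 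Otherwise $h\Omega$ meets at least two tiles; here the buffer $B$ enters, guaranteeing that $d_{h\omega} \notin F\Delta^{-1}$ for every $\omega \in \Omega$ — indeed, if $d_{h\omega_0} \in F\Delta^{-1}$ for some $\omega_0$, then $h = n_{h\omega_0} d_{h\omega_0}\omega_0^{-1} \in n_{h\omega_0}F\Delta^{-1}\Omega^{-1}$, hence $h\Omega \subseteq n_{h\omega_0}B \subseteq n_{h\omega_0}D$, a contradiction. Thus $d_{h\omega} \in \Omega_2$ for every $\omega$, so $z(h\omega) = \xi(d_{h\omega}) = x_0(d_{h\omega}) = x_0(h\omega)$, the last equality using $N \subseteq N_0 = \Stab(x_0)$; therefore $p_h(\omega) = x_0(h\omega)$ and $p_h \in \PP$ because $x_0 \in X$.

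The only genuinely delicate part is this last verification, and in particular the choice of the buffer $B$ large enough that every window of shape $\Omega$ straddling a tile boundary sees only the periodic configuration $x_0$ — whose own periodicity makes it agree, across the seams, with the periodization $z$ of the patch $\xi$ — while the finitely many windows contained in a single tile see only $\xi \in X$. Everything else is bookkeeping: choosing $N$ small enough via residual finiteness, fitting $B$ inside a fundamental domain, and checking the separation hypothesis of strong irreducibility.
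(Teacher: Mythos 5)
Your proof is correct and follows essentially the same strategy as the paper's: surround the target pattern with the periodic configuration $x_0$ via strong irreducibility, use residual finiteness to produce a finite-index subgroup of $\Stab(x_0)$ that separates the enlarged (buffered) window, periodize, and verify the finite-type condition by distinguishing windows that see the copied patch from windows that see only $x_0$, whose own invariance handles the seams. The differences are only bookkeeping: you work with an explicit transversal and a not-necessarily-normal subgroup and keep the defining window $\Omega$ separate from the irreducibility set $\Delta$, whereas the paper uses normal subgroups, the quotient map $\rho_H$, and a single enlarged $\Delta$ playing both roles.
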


As an immediate consequence, we get the following statement.

\begin{corollary}
Let $G$ be a residually finite group  and let $A$ be a finite set.
Suppose that $X \subset A^G$ is a strongly irreducible subshift of finite type containing a periodic configuration.
Then $X$ is surjunctive and its automorphism group $\Aut(X)$ is residually finite.
If in addition $G$ is countable, then $X$ admits an invariant Borel probability measure with full support.
\qed
\end{corollary}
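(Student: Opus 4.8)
The plan is to derive all three assertions directly from Theorem~\ref{t:density-periodic} together with the general facts recorded in Proposition~\ref{p:properties-density}. The hypotheses of the corollary are precisely those of Theorem~\ref{t:density-periodic}: the group $G$ is residually finite, $A$ is finite, $X \subset A^G$ is a strongly irreducible subshift of finite type, and $X$ contains a periodic configuration. Thus the theorem applies and tells us that the periodic configurations form a dense subset of $X$. Everything else will follow from this single conclusion.

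First I would obtain surjunctivity and residual finiteness of $\Aut(X)$. These are exactly items (i) and (ii) of the general principle stated in the introduction and established in Proposition~\ref{p:properties-density}: whenever a subshift contains a dense set of periodic configurations it is surjunctive and its automorphism group is residually finite. Since we have just seen that periodic configurations are dense in $X$, both statements are immediate, requiring no hypothesis on $G$ beyond what is already assumed.

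The remaining assertion, the existence of an invariant Borel probability measure of full support, requires the extra hypothesis that $G$ be countable, and here the construction sketched in the introduction must be set up with a little care. When $G$ is countable the space $A^G$ is metrizable, hence so is the compact subspace $X$, and in particular $X$ is separable. Using separability together with the density of periodic configurations, I would choose a countable dense set of periodic points of $X$. Each such point lies in a finite $G$-orbit, and since any two $G$-orbits either coincide or are disjoint, discarding repetitions yields a sequence $(\OO_n)_{n \geq 1}$ of pairwise disjoint finite orbits whose union still contains the chosen dense set and is therefore dense in $X$. Fixing any positive reals $(a_n)_{n \geq 1}$ with $\sum_{n \geq 1} a_n = 1$, I would then form
$$
\mu = \sum_{n \geq 1} \frac{a_n}{\vert \OO_n \vert} \left( \sum_{x \in \OO_n} \delta_x \right),
$$
exactly as in the introduction. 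This is a $G$-invariant Borel probability measure, and since every point of each $\OO_n$ receives strictly positive mass, the support of $\mu$ contains $\bigcup_{n \geq 1} \OO_n$; being closed, it then contains the closure of this dense set, namely all of $X$.

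The only point needing genuine attention is the passage from the abstract density of periodic configurations to a countable sequence of pairwise disjoint finite orbits with dense union; this rests solely on the metrizability, and hence separability, of $X$ guaranteed by the countability of $G$, together with the elementary fact that distinct orbits are disjoint. No further obstacle arises, since surjunctivity and the residual finiteness of $\Aut(X)$ are read off directly from Proposition~\ref{p:properties-density}.
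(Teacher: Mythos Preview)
Your proposal is correct and follows exactly the approach intended by the paper, which marks the corollary with \qed\ as an immediate consequence of Theorem~\ref{t:density-periodic}, Proposition~\ref{p:properties-density}, and the measure construction sketched in the introduction. Your careful justification of the passage from density of periodic points to a countable sequence of disjoint finite orbits (via metrizability when $G$ is countable) simply makes explicit what the paper leaves implicit.
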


Theorem \ref{t:density-periodic} was previously obtained by Lightwood \cite[Lemma 5.4]{lightwood} in the case   $G = \Z^d$ is a free abelian group of finite rank.
 Lightwood \cite[Lemma 9.2]{lightwood} also
proved that any strongly irreducible subshift of finite type over $\Z^2$ contains a dense set of periodic configurations.  
 According to \cite[Section 9]{lightwood}, \cite[Section 10]{lightwood-2}, and \cite{hochman}, 
 the  question whether every strongly irreducible subshift of finite type over $\Z^d$  
 contains a dense set of periodic configurations 
 remains open for $d \geq 3$.
  Robinson \cite{robinson} (see also \cite[Example 3]{schmidt}) constructed, using Wang tiles,  nonempty subshifts of finite type over $\Z^2$ without periodic configurations.
On the other hand, it is well known that every nonempty subshift of finite type over $\Z$ must contain a periodic configuration and that every irreducible subshift of finite type, and even every irreducible sofic subshift, over $\Z$ contains a dense set of periodic configurations (see   
\cite{fiorenzi-periodic} and Section \ref{sec:strongly-over-Z} below).
It is also known that a subshift of finite type  over $\Z$ is topologically mixing if and only if it is strongly irreducible (see for example \cite[Corollary 6.1]{cc-myhill}).
In fact, when $G = \Z$, strong irreducibility is enough to guarantee density of periodic configurations. 
More precisely, we have the following result whose proof was kindly communicated to us by Benjy Weiss.

\begin{theorem}
\label{t:benjy}
Let $A$ be a finite set and let $X \subset A^\Z$ be a strongly irreducible subshift.
Then  $X$ contains a dense set of periodic configurations.
 \end{theorem}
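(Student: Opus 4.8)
The plan is to exploit the one-dimensional structure: a strongly irreducible subshift $X \subset A^\Z$ is in particular topologically transitive, so it admits a configuration $\bar x \in X$ whose forward orbit is dense, and hence every word that occurs in some element of $X$ occurs as a subword of $\bar x$. Fix the finite ``gluing'' set $\Delta \subset \Z$ provided by strong irreducibility; since $\Z$ is involved we may take $\Delta = \{-N, -N+1, \dots, N\}$ for some $N \geq 0$, so that strong irreducibility says: any two intervals of $\Z$ separated by a gap of length at least $N$ can be filled in independently by (the restrictions of) any two configurations in $X$, using a single configuration of $X$.

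First I would fix an arbitrary $x \in X$ and an arbitrary $k \geq 1$; it suffices to produce a periodic $y \in X$ agreeing with $x$ on the interval $[-k, k]$. Consider the word $w = x|_{[-k-N,\, k+N]}$, which is a word occurring in $X$. The key step is to build, from $w$, a periodic configuration of $X$ all of whose ``central blocks'' look like $w$. To do this, apply strong irreducibility repeatedly (or rather, apply it to the two half-lines): I want a configuration $z \in X$ that agrees with $x$ on $[-k-N, k+N]$ and is such that the translate of this window by the period also carries the same word. The cleanest route is: pick a large period $p$ (to be chosen), and set up a ``periodic pattern'' consisting of copies of $w$ placed on the intervals $I_j = [jp - k - N,\, jp + k + N]$ for all $j \in \Z$. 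These intervals are pairwise disjoint once $p > 2(k+N)$, and consecutive ones are separated by a gap of length $p - 2(k+N) - 1$; choosing $p$ large enough makes every gap have length $\geq N$. Strong irreducibility, applied inductively to glue in one more copy of $w$ on the right and on the left, together with a compactness argument (the restrictions of $X$ to finite windows form an inverse system, and $X$ is closed in $A^\Z$), produces a configuration $z \in X$ with $z|_{I_j} = w$ for every $j \in \Z$.

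Next I would extract a genuinely periodic configuration. The configuration $z$ need not itself be $p$-periodic, but it is ``$p$-periodic on a net of windows.'' Here I invoke the defining property of strong irreducibility once more, but now to splice $z$ with its own shift: consider $\sigma^p z$, where $\sigma$ is the shift by $p$; it too has the word $w$ on each $I_j$. I would glue together, on consecutive fundamental domains, copies that match on the overlap regions $I_j$ — more precisely, since the copies of $w$ sit in the middle of each period-block and have a margin of width $N$ on each side (the $[-k-N,-k]$ and $[k, k+N]$ parts of $w$ serve as buffer zones), I can cut $z$ inside these buffer zones and repeat one fundamental domain periodically. Concretely: define $y \in A^\Z$ by declaring $y$ to be $p$-periodic with $y|_{[-p/2,\, p/2]}$ (say) equal to the corresponding block of $z$; the cuts occur at points congruent to $p/2 \pmod p$, which lie strictly inside the gaps between successive $I_j$'s, at distance $\geq N/2$ from either $w$-block — and by enlarging the margins (replace $N$ by $3N$ throughout from the start) one checks that $y$ still satisfies all local constraints of $X$, because $X$ is a subshift and the strong-irreducibility gluings were themselves legal configurations. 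Hence $y \in X$, $y$ is $p$-periodic, and $y|_{[-k,k]} = w|_{[-k,k]} = x|_{[-k,k]}$.

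The main obstacle is the last step: turning ``agrees with copies of $w$ on a periodic family of windows'' into ``is actually periodic and still in $X$.'' For a general subshift one cannot simply cut and repeat, because the local rules might be violated at the seams; the point of building in wide buffer zones (width comparable to $N$) on both sides of each $w$-block is precisely that strong irreducibility guarantees the existence of a filling configuration in $X$ on each gap, and two such fillings that agree on a window of width $\geq N$ around a cut point can be interchanged without leaving $X$ — this is where one must argue carefully, possibly by re-deriving a ``periodic gluing'' version of strong irreducibility for $\Z$ (glue a configuration to a shifted copy of itself, matching on a long overlap, and conclude the result is forced to be periodic by uniqueness of the overlap). Once that lemma is in hand, density of periodic configurations follows since $x$ and $k$ were arbitrary.
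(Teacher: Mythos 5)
The first half of your plan is fine: by applying $\Delta$-irreducibility inductively and then a compactness argument, you can indeed produce $z \in X$ carrying the word $w$ on the periodic family of windows $I_j$. The genuine gap is the last step, and it is exactly where you yourself hedge. The claim that ``two fillings that agree on a window of width $\geq N$ around a cut point can be interchanged without leaving $X$'' is a splicing (finite-type) property, and it does \emph{not} follow from strong irreducibility: strong irreducibility lets you realize prescribed data on two windows separated by a gap of length $\geq N$, but it gives you no control whatsoever over what the gluing puts in the gap, and no license to cross over two configurations along a common block. A concrete counterexample to your splicing claim is the subshift of Example \ref{e:not-sofic} (no words $01^m2^n0$ with $m \neq n$), which is strongly irreducible but not of finite type: two configurations can agree on an arbitrarily long run of $2$'s while carrying different numbers of preceding $1$'s, and cutting one and continuing with the other creates a forbidden word. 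For the same reason, periodically repeating one fundamental domain of your $z$ can violate constraints across the seams no matter how wide the buffer zones are, since nothing ties the (uncontrolled) gap-fillings in consecutive gaps of $z$ to one another. The alternative you sketch --- gluing $z$ to $\sigma^p z$ ``matching on a long overlap'' and deducing periodicity ``by uniqueness of the overlap'' --- is unavailable for the same reason: the gluing axiom never prescribes the overlap/gap contents, and no uniqueness statement holds. Since the theorem is precisely about subshifts that need not be of finite type (Theorem \ref{t:density-periodic} already covers the SFT case), this step cannot be repaired by ``enlarging the margins.''

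For comparison, the paper resolves exactly this difficulty by working in the language $L(X)$ with a minimality trick rather than by cutting configurations: letting $F(u)$ be the set of connecting words $c$ with $\vert c\vert \leq n_0$ and $ucu \in L(X)$, one picks $u_0$ with $\vert F(u_0)\vert$ minimal; then $F(u_0vu_0) = F(u_0)$ for every $v$ with $u_0vu_0 \in L(X)$, so one fixed connector $c_0$ remains legal after every doubling, giving $(u_0vu_0c_0)^n \in L(X)$ for all $n$ and hence the periodic configuration $(u_0vu_0c_0)^\infty \in X$ (Propositions \ref{p:conditions-W}, \ref{p:si-W} and Theorem \ref{density-for-W}). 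That single reusable connecting word is the ingredient your buffer-zone argument is missing.
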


\begin{corollary}
Let $A$ be a finite set and
let $X \subset A^\Z$ be a strongly irreducible subshift.  
Then $X$ is surjunctive and its automorphism group $\Aut(X)$ is residually finite.
Moreover,  $X$ admits an invariant Borel probability measure with full support.
\qed
\end{corollary}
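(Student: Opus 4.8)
The plan is to derive the corollary formally from Theorem \ref{t:benjy} together with the general facts collected in Proposition \ref{p:properties-density} and the ergodic construction sketched in the introduction. The first step is simply to invoke Theorem \ref{t:benjy}: since $X \subset A^\Z$ is strongly irreducible, it contains a dense set of periodic configurations. Once density of periodic points is established, all three assertions are standard consequences, so the substantive content of the corollary lies entirely in the already-proved Theorem \ref{t:benjy}.

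For surjunctivity and the residual finiteness of $\Aut(X)$ I would apply parts (i) and (ii) of Proposition \ref{p:properties-density} directly to $X$, whose hypothesis (density of periodic configurations) is now verified. Note that part (iii) of that proposition is not needed here: it concerns the residual finiteness of $G$, which for $G = \Z$ is automatic.

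For the invariant measure I would reproduce the construction indicated in the introduction. Since $G = \Z$ is countable and $A$ is finite, $X$ is metrizable, hence separable, so it admits a countable dense subset; by density of periodic configurations this subset may be taken to consist of periodic points. The orbits of a group action partition $X$, so any two orbits are either equal or disjoint, and discarding repetitions yields a sequence $(\OO_n)_{n \geq 1}$ of pairwise disjoint finite orbits whose union contains the chosen dense set and is therefore dense in $X$. Choosing positive reals $a_n$ with $\sum_{n \geq 1} a_n = 1$ and setting
$$
\mu = \sum_{n \geq 1} \frac{a_n}{\vert \OO_n \vert}\left(\sum_{x \in \OO_n} \delta_x\right)
$$
produces a Borel probability measure which is $G$-invariant, each summand being the normalized uniform measure on a finite orbit. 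Its support is all of $X$: every point $x \in \OO_n$ satisfies $\mu(\{x\}) = a_n/\vert \OO_n \vert > 0$, so $\bigcup_{n} \OO_n$ is contained in the support, and since the support is closed and contains a dense set it must coincide with $X$.

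I do not expect any genuine obstacle in this argument, as it is purely formal once Theorem \ref{t:benjy} is available. The only point requiring a moment's care is the extraction of pairwise disjoint finite orbits with dense union, which is immediate from the observation that distinct orbits are disjoint; everything else reduces to quoting Proposition \ref{p:properties-density} and carrying out the elementary verification that $\mu$ is invariant and fully supported.
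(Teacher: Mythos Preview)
Your proposal is correct and mirrors exactly the paper's intended derivation: the corollary is stated with a \qed and no proof, precisely because it follows immediately from Theorem \ref{t:benjy} together with Proposition \ref{p:properties-density}(i)--(ii) and the measure construction in the introduction. Your write-up simply makes these implicit steps explicit, including the routine extraction of disjoint finite orbits, and contains no gaps.
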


Let us note that the surjunctivity of strongly irreducible subshifts over $\Z$ also follows  
 from the more general fact that if $G$ is an amenable group then any strongly irreducible subshift over $G$ is surjunctive (see \cite[Corollary 4.4]{cc-myhill}).
\par
Observe that  Theorem \ref{t:density-periodic}  applies in particular to quiescent strongly irreducible subshifts of finite type over residually finite groups (a subshift is called \emph{quiescent} if it contains a constant configuration, i.e., a configuration that is fixed by the shift action).
Examples of such  subshifts are provided by the \emph{golden mean subshift}
which is the subshift over the group $\Z$   consisting of all bi-infinite sequences of $0$'s and $1$'s with no consecutive $1$'s, and by all \emph{generalized golden mean subshifts}
over residually finite groups 
(see Example \ref{ex:golden} for the definition of generalized golden mean subshifts).
In fact, the golden mean subshift and its generalizations are quiescent and have  bounded propagation in the sense of  Gromov \cite{gromov-esav}
(see  Definition \ref{def:bounded-propagation} below)  and it turns out that every subshift with bounded propagation is strongly irreducible and of finite type
(cf. \cite[Proposition 4.10]{fiorenzi-strongly} and Proposition \ref{p:bounded-prop-implies-ft} below).
  
The paper is organized as follows.
In Section \ref{s:background}, we fix notation and present some background material on shifts and subshifts. 
Section \ref{sec:density} contains the proof of Theorem \ref{t:density-periodic}.
The class of W-subshifts, which is a class of subshifts over $\Z$ strictly containing the class of irreducible sofic subshifts and the class of strongly irreducible subshifts, 
is introduced in Section \ref{s:W-subshifts}.
In Section \ref{sec:strongly-over-Z},
we establish the density of periodic configurations in W-subshifts
(Theorem \ref{density-for-W}).
In the final section, we have collected some additional remarks and a list of open questions.

\section{Background material }
\label{s:background}

\subsection{Group actions}
Let $X$ be a set equipped with an action of a group $G$, i.e., a map
$(g,x) \mapsto gx$ from $G \times X$ into $X$ such that
$g_1 (g_2 x) = (g_1 g_2)x$ and $1_G x = x$ for all $g_1,g_2 \in G$ and $x \in X$ (we denote by $1_G$ the identity element of $G$).
The orbit of a point $x \in X$ is the subset
$G x := \{g x : g \in G\}\subset X$ and its stabilizer is the subgroup
$\Stab_G(x) := \{g \in G : g x = x \} \subset G$.
Given a subgroup $H$ of $G$, one says that a point $x \in X$ is \emph{fixed} by $H$  if $H \subset \Stab_G(x)$, i.e., if $hx = x$ for all $h \in H$.
A point $x \in X$ is called \emph{periodic} if its orbit is finite.
The following conditions are equivalent: (1) $x$ is periodic; (2) the subgroup $\Stab_G(x)$ is of finite index in $G$;
(3) there exists a subgroup of finite index $H \subset G$ fixing $x$. 
 \par
Suppose now that $X$ is a topological space. The action of $G$ on $X$ is said to be 
\emph{continuous} if, for each $g \in G$, the map $x \mapsto g x$ is continuous.

 \subsection{Subshifts}
Throughout this paper, $G$ is a group and $A$ is a finite set.
  \par
  If $\Omega \subset G$ is a finite subset, an element $p \in A^\Omega$ is called a \emph{pattern} with \emph{support} $\Omega$.
For $x \in A^G$, we denote by $x\vert_\Omega$ the pattern with support $\Omega$ obtained 
by restricting $x$ to $\Omega$.
Given  subsets $X \subset A^G$ and   $\Omega \subset G$, we define the subset 
$X\vert_\Omega \subset A^\Omega$ by
$X\vert_\Omega = \{x\vert_\Omega : x \in X\}$.
\par
It  follows from the definition of the prodiscrete topology that a subset $X \subset A^G$ is closed in $A^G$ if and only if it satisfies the following condition:
if a configuration $x \in A^G$ is such that  $x\vert_\Omega \in X\vert_\Omega$ for every finite subset $\Omega \subset G$, then $x \in X$.
  \par
 The following fact is well known. We give a proof for completeness.

\begin{proposition}
\label{p:properties-density}
Let $G$ be a group and let $A$ be a finite set.
Suppose that  $X \subset A^G$ is a subshift containing a dense set of periodic configurations. Then
\begin{enumerate}[\rm (i)]
\item
$X$ is surjunctive;
\item
the automorphism group $\Aut(X)$ is residually finite;
\item
if the action of $G$ on $X$ is faithful then  the group $G$ itself is residually finite.
\end{enumerate}
\end{proposition}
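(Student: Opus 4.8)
The plan is to reduce all three assertions to one elementary finiteness observation together with one topological one. For a subgroup $H \le G$ of finite index, put
\[
\Fix_X(H) := \{x \in X : hx = x \text{ for all } h \in H\};
\]
since a configuration fixed by $H$ is constant on each right coset $Hg$ and there are only finitely many such cosets, $\Fix_X(H)$ is a \emph{finite} set. Moreover $x \in X$ is periodic if and only if $x \in \Fix_X(H)$ for some finite-index $H$: one direction is clear, and for the other one takes $H = \Stab_G(x)$ and then replaces it by its normal core, which is still of finite index. Hence the set $P$ of periodic configurations of $X$ equals $\bigcup_H \Fix_X(H)$, the union running over the normal subgroups $H \trianglelefteq G$ of finite index. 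The topological ingredient is that $A^G$ is Hausdorff, so two continuous maps $X \to A^G$ agreeing on the dense set $P$ agree everywhere; in particular a continuous $G$-equivariant self-map of $X$ that restricts to the identity on $P$ equals $\Id_X$.

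For (i): given an injective cellular automaton $\tau \colon X \to X$, $G$-equivariance gives $\tau(\Fix_X(H)) \subseteq \Fix_X(H)$, and an injective self-map of a finite set is bijective, so $\Fix_X(H) \subseteq \tau(X)$ for every finite-index $H$; thus $P \subseteq \tau(X)$. But $X$ is compact (being closed in the compact space $A^G$) and $\tau$ is continuous, so $\tau(X)$ is compact, hence closed in $A^G$; a closed subset of $X$ containing the dense set $P$ must be $X$ itself, whence $\tau(X) = X$.

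For (ii): every $\varphi \in \Aut(X)$ commutes with the $G$-action and therefore permutes the finite set $\Fix_X(H)$, which yields a homomorphism $\rho_H \colon \Aut(X) \to \Sym(\Fix_X(H))$ whose kernel $N_H$ has finite index in $\Aut(X)$. If $\varphi \in \bigcap_H N_H$, then $\varphi$ fixes $P$ pointwise, so $\varphi = \Id_X$ by the density remark above; hence $\bigcap_H N_H$ is trivial and $\Aut(X)$ is residually finite.

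For (iii): assume the $G$-action on $X$ is faithful, and let $H \trianglelefteq G$ have finite index. Normality ensures that $G$ preserves $\Fix_X(H)$ (if $hx = x$ for all $h \in H$, then $h(gx) = g\bigl((g^{-1}hg)x\bigr) = gx$), so there is a homomorphism $\rho_H \colon G \to \Sym(\Fix_X(H))$ with kernel $N_H$ of finite index in $G$. An element $g \in \bigcap_H N_H$ fixes every periodic configuration, hence fixes all of $X$ by density, hence acts trivially on $X$, hence equals $1_G$; thus $\bigcap_H N_H = \{1_G\}$ and $G$ is residually finite. The only step that needs attention is the passage to the normal core in part (iii) — it is what lets $G$ (and not merely the automorphism group, which automatically commutes with the $G$-action) act on the finite sets $\Fix_X(H)$ — but there is otherwise no genuine obstacle here.
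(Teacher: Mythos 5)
Your proof is correct and follows essentially the same route as the paper: finiteness of the fixed-point sets of finite-index subgroups, stability of these sets under equivariant maps, and density of periodic configurations to pass from the finite sets to all of $X$. The only difference is cosmetic — you organize everything around normal cores $\Fix_X(H)$ rather than the stabilizer sets $F_x$ of individual periodic points, and for (iii) you write out the normal-core argument that the paper delegates to its citation of \cite[Theorem 2.7.1]{ca-and-groups-springer}.
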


\begin{proof}
(i)
Suppose that  $\tau \colon X \to X$ is a  cellular automaton.
Let $x \in X$ be a periodic configuration and $H_x := \Stab_G(x)$.
Let 
$$
F_x := \{y \in X: h y = y \text{ for all } h \in H_x\}
$$ 
denote the set of  configurations in $X$ that are fixed by $H_x$. 
Observe that the set $F_x$ is finite
since $H_x$ is of finite index in $G$ and every element $y \in F_x$ is entirely determined by its restriction to a complete set of representatives of the right cosets of $H_x$ in $G$.
On the other hand, $\tau$ sends $F_x$ into itself since $\tau$ is $G$-equivariant.
If $\tau$ is injective, we deduce that $\tau(F_x) = F_x$.
Since $x \in F_x$, it follows  that $x$ is in the image of $\tau$.
As $\tau(X)$ is closed in $X$ by compactness, the density of periodic points implies that $\tau(X) = X$. This shows that $X$ is surjunctive.
\par 
(ii)
Let $x \in X$ be a periodic configuration.
Keeping the notation introduced in the proof of (i),
we see that each $\tau \in \Aut(X)$ induces by restriction a permutation of the set $F_x$.
This yields a group homomorphism $\rho_x$ from  $\Aut(X)$ into the symmetric group of $F_x$.
As $F_x$ is finite, its kernel  $\Ker(\rho_x)$ is of finite index in $G$.
Since $x \in F_x$ and periodic configurations are dense in $X$,
the intersection of the subgroups $\Ker(\rho_x)$, where $x$ runs over all periodic configurations in $X$, is reduced to the identity map on $X$.
It follows that $\Aut(X)$ is residually finite.
\par
(iii)
If a group $G$ acts faithfully and continuously on
a Hausdorff space  $X$ and periodic points are dense in $X$, then
 $G$ is   residually finite (see for example 
\cite[Theorem 2.7.1]{ca-and-groups-springer}). 
  \end{proof}

\begin{remark}
Assertions (i) and (ii) of Proposition \ref{p:properties-density} are proved in \cite{fiorenzi-periodic} under the hypothesis that $G$ is finitely generated.
\end{remark}
\subsection{Subshifts of finite type and strongly irreducible subshifts} 
Let   $\Omega$ be a finite subset of $G$. 
Given a subset $\PP \subset A^\Omega$,
the subset $X(\Omega,\PP) \subset A^G$ defined by
$$
X(\Omega,\PP) := \{x \in A^G : (g x)\vert_\Omega \in \PP \text{  for all  } g \in G \}
$$
is clearly a subshift of $A^G$. 
One says that a subshift $X \subset A^G$ is \emph{of finite type} if there exist a finite subset $\Omega \subset G$ and a subset $\PP \subset A^\Omega$ such 
that $X = X(\Omega,\PP)$.
The subset  $\Omega$ is then called a \emph{defining window} and that $\PP$ is a \emph{set of defining patterns} for $X$.
\par
One says that a dynamical system $(X,G)$ is a \emph{factor} of a dynamical system $(Y,G)$ if there
exists a surjective continuous $G$-equivariant map $f \colon Y \to X$.
Note that if $(X,G)$ is a factor of $(Y,G)$ and periodic points are dense in $Y$, 
then periodic points are also dense in $X$.
\par
One says that a subshift $X \subset A^G$ is \emph{sofic} if $X$ is a factor of some subshift of finite type, i.e., if there exists a finite set $B$, a subshift of finite type $Y \subset B^G$, and a surjective continuous 
$G$-equivariant map $f \colon Y \to X$. 
\par
Let $\Delta$ and $\Omega$ be finite subsets of $G$. 
 The $\Delta$-\emph{neighborhood} of $\Omega$  is defined as being  the subset of $G$ 
consisting of all elements $g \in G$ such that the set $g \Delta$ meets $\Omega$. 
Thus we have
  \begin{equation*}
  \Omega^{+\Delta} := \{g \in G : g\Delta \cap \Omega \not= \varnothing \} = \Omega\Delta^{-1}.
 \end{equation*}
Note that we have $\Omega^{+\Delta} \subset \Omega^{+ \Delta'}$ if $\Delta \subset \Delta' \subset G$. 
Observe also that we have $\Omega \subset \Omega^{+\Delta}$ whenever $1_G \in \Delta$.

\begin{definition}
\label{def:delta-irred-subshift}
Let $\Delta$ be a finite subset of $G$.
A subshift $X \subset A^G$ is said to be $\Delta$-\emph{irreducible} if it satisfies the following condition:
if $\Omega_1$ and $\Omega_2$ are finite subsets of $G$ such that  
\begin{equation}
\label{e:delta-neighbor-disjoint}
\Omega_1^{+\Delta}  \cap \Omega_2 = \varnothing,
\end{equation}
 then, given any two configurations $x_1$ and $x_2$ in $X$, there exists a configuration $x \in X$ which satisfies
$x\vert_{\Omega_1} = x_1\vert_{\Omega_1}$ and $x\vert_{\Omega_2} = x_2\vert_{\Omega_2}$.
\par
A subshift $X \subset A^G$ is called \emph{strongly irreducible} if there exists a finite subset 
$\Delta$ of $G$  such that $X$ is $\Delta$-irreducible.
 \end{definition}

Note that if a subshift $X \subset A^G$ is $\Delta$-irreducible for some finite subset $\Delta \subset G$, then
$X$ is $\Delta'$-irreducible for any finite subset $\Delta' \subset G$ such that $\Delta \subset \Delta'$.

\begin{remark}
An easy compactness argument (cf.  \cite[Lemma~4.6]{cc-myhill})
shows that we get an equivalent definition of $\Delta$-irreducibility if the sets 
$\Omega_1$ and $\Omega_2$ are allowed to be infinite
in Definition \ref{def:delta-irred-subshift}.
\end{remark}

\begin{remark}
For $G = \Z^d$, strongly irreducible subshifts were introduced in \cite[Definition 1.10]{burton-steif},    in \cite[Section 2]{ward} (under the name \emph{subshifts with strong specification}), and 
in \cite[Definition 2.4]{lightwood} (under the name \emph{square-mixing subshifts}).
They were also introduced in \cite[Definition 4.1]{fiorenzi-strongly} for finitely generated 
groups $G$.
 \end{remark}

  Suppose that a group $G$ acts continuously on a topological space $X$. 
 One says that the action of $G$ on $X$ is \emph{topologically mixing} if, for any pair of nonempty open subsets $U$ and $V$ of $X$, there exists a finite subset $F \subset G$ such that $U  \cap gV 
 \neq \varnothing$ for all $g \in G \setminus F$.
\par 
One says that a subshift $X \subset A^G$ is \emph{topologically mixing}  if the action of $G$ on $X$ is topologically mixing.
This amounts to saying that $X$ satisfies the following condition:
for any finite subset $\Omega \subset G$ and any two configurations  $x_1, x_2 \in X $,
there exists a finite subset $F \subset G$ such that, for all $g \in G \setminus F$, there exists a configuration $x \in X$ such that $x\vert_\Omega = x_1\vert_{\Omega}$ and $ x\vert_{g\Omega} = x_2\vert_{g\Omega}$. 
 \par
 Every strongly irreducible subshift $X \subset A^G$ is topologically mixing  
(see for example \cite[Proposition 3.3]{cc-myhill}).

\begin{remark}
The Ledrappier subshift, i.e.,  the subshift $X \subset \{0,1\}^{\Z^2}$ consisting of all $x\in \{0,1\}^{\Z^2}$ such that $x(m,n) + x(m + 1,n) + x(m ,n+ 1)$ is even for all $(m,n) \in \Z^2$, provides an example of a quiescent subshift of finite type over 
$\Z^2$  which is  topologically mixing but not strongly irreducible 
(cf. \cite[Example 1.8]{burton-steif}).
On the other hand, it is known that every topologically mixing sofic subshift  over $\Z$ is strongly irreducible (see for example \cite[Corollary 1.3]{cc-myhill}). 
\end{remark}

  \begin{definition}
 \label{def:bounded-propagation}
Let $G$ be a group and let $A$ be a finite set.
\par
Let $\Delta$ be a finite subset of $G$.
A subshift $X \subset A^G$ is said to have $\Delta$-\emph{propagation} if it satisfies the following condition:
 if $\Omega$ is a finite subset of $G$  and  $p \in A^\Omega$ is a pattern whose
 restriction to $\Omega \cap g\Delta$ belongs to $X\vert_{\Omega \cap g\Delta}$  for all  $g \in G$,
then $p \in X\vert_\Omega$.
\par
A subshift $X \subset A^G$ is said to have \emph{bounded propagation} if there exists a finite 
subset $\Delta \subset G$ such that $X$ has $\Delta$-propagation.
\end{definition}

Subshifts with bounded propagation were introduced by Gromov \cite[p. 160]{gromov-esav}
 (see also \cite{fiorenzi-strongly}).

\begin{example}[Generalized golden mean subshifts]
\label{ex:golden}
Let $G$ be a group and take $A = \{0,1,\dots,m\}$, where $m$ is a positive integer.
Suppose that we are given a finite sequence $E_1,\dots,E_k$ of finite subsets of $G$.
Consider  the subshift $X \subset A^G$ consisting of all $x \in A^G$ such that, for all $g \in G$ and $1 \leq i \leq k$, there exists an element $h \in gE_i$ satisfying
$x(h)  = 0$.
This is a quiescent subshift since the identically-$0$ configuration is in $X$. 
Moreover,  $X$ has $\Delta$-propagation for $\Delta = E_1 \cup \dots \cup E_k$.
Indeed, suppose that $\Omega \subset G$ is a finite subset and that $p \in A^\Omega$ is an element whose
 restriction to $\Omega \cap g\Delta$ belongs to $X\vert_{\Omega \cap g\Delta}$  for all  $g \in G$.
Then the configuration $x \colon G \to A$ defined by
$$
x(g) =
\begin{cases}
p(g) & \text{ if  }g \in \Omega \\
0 & \text{ if  } g \in G \setminus \Omega
\end{cases}
$$
satisfies $x \in X$ and $x\vert_\Omega = p$.
\par
For $G = \Z$, $m = k = 1$,  and $E_1 = \{0,1\}$, we recover the usual golden mean subshift 
$X \subset \{0,1\}^\Z$  
consisting of all bi-infinite sequences of $0$'s and $1$'s with no consecutive $1$'s.
\par
For $G = \Z^2$, $m = 1$, $k = 2$, $E_1 = \{(0,0),(1,0)\}$ and $E_2 = \{(0,0),(0,1)\}$,
we obtain the  generalized golden mean subshift $X \subset \{0,1\}^{\Z^2}$ also known as the \emph{hard sphere model}
(see \cite[Example 3.2]{lind}).
\par
For $G = \Z^d$, $k = d$, and $E_i = \{0,e_i\}$, $1 \leq i \leq d$, where $e_1,\dots,e_d$ denotes the canonical basis of  $\Z^d$, we get the \emph{generalized hard-sphere model} 
$X \subset \{0,1,\dots,m\}^{\Z^d}$ described in
\cite[Example 1.6]{burton-steif}. 
  \end{example}

The following result was  proved by Fiorenzi \cite[Proposition 4.10]{fiorenzi-strongly} in the   case when $G$ is finitely generated. 

\begin{proposition}
\label{p:bounded-prop-implies-ft}
Let $G$ be a group and let $A$ be a finite set.
Then every subshift  $X \subset A^G$ with bounded propagation
is strongly irreducible and of finite type.
 \end{proposition}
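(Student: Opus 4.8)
The plan is to show directly that if $X$ has $\Delta$-propagation for some finite $\Delta \subset G$ with $1_G \in \Delta$ (which we may assume, replacing $\Delta$ by $\Delta \cup \{1_G\}$ if necessary), then $X$ is both of finite type and strongly irreducible, exhibiting in each case the relevant finite data. For finite type, the natural guess is that $X = X(\Delta, X\vert_\Delta)$. The inclusion $X \subset X(\Delta, X\vert_\Delta)$ is immediate since $(gx)\vert_\Delta = (x\vert_{g^{-1}\Delta}) \circ (g^{-1}\cdot) \in X\vert_{g^{-1}\Delta}$ translated, which lies in $X\vert_\Delta$ by $G$-invariance of $X$. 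Conversely, suppose $x \in X(\Delta, X\vert_\Delta)$; to conclude $x \in X$ it suffices, since $X$ is closed, to show $x\vert_\Omega \in X\vert_\Omega$ for every finite $\Omega \subset G$. Fix such an $\Omega$ and set $p = x\vert_\Omega$. For any $g \in G$, the restriction of $p$ to $\Omega \cap g\Delta$ is the restriction of $x$ to $\Omega \cap g\Delta$, which is a sub-pattern of $(g x)\vert_\Delta \in X\vert_\Delta$ (after the appropriate translation identifying $g\Delta$ with $\Delta$), hence lies in $X\vert_{\Omega \cap g\Delta}$. By $\Delta$-propagation, $p \in X\vert_\Omega$. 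This proves $X = X(\Delta, X\vert_\Delta)$, so $X$ is of finite type with defining window $\Delta$.

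For strong irreducibility, I would check that $X$ is $\Delta$-irreducible in the sense of Definition \ref{def:delta-irred-subshift}. Let $\Omega_1, \Omega_2$ be finite subsets of $G$ with $\Omega_1^{+\Delta} \cap \Omega_2 = \varnothing$, equivalently $\Omega_1 \cap \Omega_2^{+\Delta} = \varnothing$ (using $\Omega_1^{+\Delta} = \Omega_1 \Delta^{-1}$; here one should be a little careful and possibly symmetrize, taking $\Delta$ to contain $\Delta^{-1}$ as well, which is harmless). Given $x_1, x_2 \in X$, define $x \colon G \to A$ by $x(g) = x_1(g)$ for $g \in \Omega_1$, $x(g) = x_2(g)$ otherwise. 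I claim $x \in X$, again via $\Delta$-propagation applied on every finite window $\Omega$: it is enough to show that for every $g \in G$ the set $g\Delta$ is ``seen'' by a single one of the two configurations, i.e., $g\Delta \subset \Omega_1$ or $g\Delta \cap \Omega_1 = \varnothing$. Indeed if $g\Delta$ meets $\Omega_1$, say $g\delta \in \Omega_1$ for some $\delta \in \Delta$, then $g \in \Omega_1 \Delta^{-1} = \Omega_1^{+\Delta}$, hence (by hypothesis) $g \notin \Omega_2 \Delta^{-1}$, so $g\Delta \cap \Omega_2 = \varnothing$; but one actually wants $g\Delta \subset \Omega_1$, which is not automatic, so this is where the argument needs the symmetric reformulation. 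The clean way: for each $g$, the restriction $x\vert_{g\Delta}$ agrees with $x_1\vert_{g\Delta}$ if $g\Delta \cap \Omega_1 \neq \varnothing$ — because then $g \in \Omega_1^{+\Delta}$ forces $g \notin \Omega_2$ and more; and one shows instead that on $g\Delta$, $x$ coincides with one of $x_1, x_2$ by noting that the ``boundary zone'' where they could disagree within $g\Delta$ is excluded by the neighbourhood condition. Thus $x\vert_{g\Delta} \in \{x_1\vert_{g\Delta}, x_2\vert_{g\Delta}\} \subset X\vert_{g\Delta}$, and for a general finite window $\Omega$, $x\vert_{\Omega \cap g\Delta}$ is a sub-pattern of such a pattern, hence in $X\vert_{\Omega \cap g\Delta}$; $\Delta$-propagation then gives $x\vert_\Omega \in X\vert_\Omega$ for all finite $\Omega$, and closedness of $X$ yields $x \in X$. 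Since $x\vert_{\Omega_1} = x_1\vert_{\Omega_1}$ and $x\vert_{\Omega_2} = x_2\vert_{\Omega_2}$ by construction, $X$ is $\Delta$-irreducible.

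I expect the main obstacle to be the bookkeeping around the neighbourhood condition in the strong irreducibility part: one must choose the finite set witnessing $\Delta$-irreducibility correctly — plausibly $\Delta' = \Delta \Delta^{-1}$ or $\Delta \cup \Delta^{-1}$ rather than $\Delta$ itself — so that whenever $g\Delta$ meets $\Omega_1$ one can genuinely conclude that $g\Delta$ is disjoint from $\Omega_2$ (or vice versa), which is what licenses replacing $x\vert_{g\Delta}$ by a restriction of $x_1$ or of $x_2$. Once the correct constant is pinned down, every step is a short translation-and-restriction argument combined with the closedness characterization of subshifts recalled in Section \ref{s:background}. The finite-type part is entirely routine by comparison.
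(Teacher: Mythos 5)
Your finite-type half is correct and is essentially the paper's own argument: show $X = X(\Delta, X\vert_\Delta)$, the nontrivial inclusion following from $\Delta$-propagation applied to each finite window together with closedness of $X$. The genuine gap is in the strong-irreducibility half. You prescribe a specific global configuration $x$ ($x=x_1$ on $\Omega_1$, $x=x_2$ everywhere else) and try to verify $x\in X$ by arguing that every translate $g\Delta$ ``sees'' only one of $x_1,x_2$. The problematic translates are those meeting $\Omega_1$ and also its complement \emph{outside} $\Omega_1\cup\Omega_2$: on such a $g\Delta$ your $x$ is a genuine mixture of $x_1$ and $x_2$, and no strengthening of the separation hypothesis between $\Omega_1$ and $\Omega_2$ (your $\Delta\Delta^{-1}$, $\Delta\cup\Delta^{-1}$, \dots) can exclude them, since they involve the boundary of $\Omega_1$ against the ambient complement, not against $\Omega_2$. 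In particular your fallback claim that $x\vert_{g\Delta}$ agrees with $x_1\vert_{g\Delta}$ whenever $g\Delta$ meets $\Omega_1$ is false for the $x$ you defined. Concretely, in the golden mean subshift (which has $\{0,1\}$-propagation, Example \ref{ex:golden}), take $x_1$ with $1$'s exactly at the even integers, $x_2$ with $1$'s exactly at the odd integers, $\Omega_1=\{0\}$ and $\Omega_2=\{N\}$ with $N$ huge: your glued configuration contains the forbidden word $11$ at positions $0,1$, so it is not in $X$ no matter how far apart $\Omega_1$ and $\Omega_2$ are.

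The repair, and this is what the paper does, is not to verify membership of a prescribed global gluing but to let the propagation property itself manufacture the extension: define the pattern $p$ only on $\Omega:=\Omega_1\cup\Omega_2$ by $p\vert_{\Omega_1}=x_1\vert_{\Omega_1}$ and $p\vert_{\Omega_2}=x_2\vert_{\Omega_2}$, check that each set $\Omega\cap g\Delta$ is contained in $\Omega_1$ or in $\Omega_2$, so that $p\vert_{\Omega\cap g\Delta}\in X\vert_{\Omega\cap g\Delta}$ for all $g$, and apply $\Delta$-propagation once: the conclusion $p\in X\vert_\Omega$ already yields some $x\in X$ with $x\vert_{\Omega_1}=x_1\vert_{\Omega_1}$ and $x\vert_{\Omega_2}=x_2\vert_{\Omega_2}$. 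Your instinct about the constant is sound and applies here: the dichotomy ``$\Omega\cap g\Delta\subset\Omega_1$ or $\subset\Omega_2$'' needs that no translate $g\Delta$ meets both $\Omega_1$ and $\Omega_2$, which is guaranteed when $\Omega_1^{+\Delta'}\cap\Omega_2=\varnothing$ for $\Delta'=\Delta^{-1}\Delta$ (neither $\Delta\Delta^{-1}$ nor $\Delta\cup\Delta^{-1}$ suffices in general), and proving $\Delta^{-1}\Delta$-irreducibility is of course enough for strong irreducibility; the paper asserts the dichotomy for $\Delta$ itself, which indeed requires this adjustment. But that is a fixable matter of bookkeeping, whereas the global-gluing step is where your argument actually breaks.
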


\begin{proof}
Suppose that  $X \subset A^G$ is a subshift with bounded propagation.
Choose a finite subset  $\Delta \subset G$   such that $X$ has $\Delta$-propagation and
consider the set $\PP = X\vert_\Delta \subset A^\Delta$. If $x \in X$ then $(g^{-1}x)\vert_\Delta \in \PP$ for 
all $g \in G$ since $X$ is $G$-invariant.
Thus, we have $X \subset X(\Delta,\PP)$. Conversely, suppose that $x \in X(\Delta,\PP)$.
Let $\Omega$ be a finite subset of $G$.
For each $g \in G$, we have $g^{-1}x \in X(\Delta,\PP)$ since $X(\Delta,\PP)$ is $G$-invariant. Therefore, there exists $y \in X$ such that $(g^{-1}x)\vert_\Delta = y\vert_\Delta$. This implies 
$x\vert_{g\Delta} = (gy)\vert_{g\Delta}$ and hence 
$x\vert_{\Omega \cap g\Delta} = (gy)\vert_{\Omega \cap g\Delta}$. Thus, we have
$x\vert_{\Omega \cap g\Delta} \in X\vert_{\Omega \cap g\Delta}$ for all $g \in G$.
As $X$  has $\Delta$-propagation,
it follows that $x\vert_\Omega \in X\vert_\Omega$.
Since $X$ is closed in $A^G$, we deduce that $x \in X$.
Thus, we have $X = X(\Delta,\PP)$. This shows that $X$ is of finite type.
\par
  After replacing $\Delta$ by $\Delta \cup \{1_G\}$, we can assume that $1_G \in \Delta$. 
Suppose that   $\Omega_1$ and $\Omega_2$ are finite subsets of $G$ such that
\begin{equation}
\label{e:delta-neighbor-disjoint-2}
\Omega_1^{+\Delta} \cap \Omega_2   = \varnothing.
\end{equation}
Note that this implies $\Omega_1 \cap \Omega_2 = \varnothing$ since $\Omega_1 \subset \Omega_1^{+\Delta}$.
Let $x_1,x_2 \in X$.
 Consider the finite subset $\Omega = \Omega_1 \cup \Omega_2 \subset G$
 and the element $p \in A^\Omega$ given by  $p\vert_{\Omega_1} = x_1\vert_{\Omega_1}$ 
 and $p\vert_{\Omega_2} = x_2\vert_{\Omega_2}$ (observe that $p$ is well defined since the sets $\Omega_1$ 
 and $\Omega_2$ are disjoint).
For all $g \in G$, we have $\Omega \cap g\Delta \subset \Omega_1$ 
or $\Omega \cap g\Delta \subset \Omega_2$ by \eqref{e:delta-neighbor-disjoint-2}.
This implies that
$p\vert_{\Omega \cap g\Delta} = x_1\vert_{\Omega_1 \cap g\Delta}$ or
$p\vert_{\Omega \cap g\Delta} = x_2\vert_{\Omega_2 \cap g\Delta}$.
 It follows that $p\vert_{\Omega \cap g\Delta} \in X\vert_{\Omega \cap g\Delta}$ for all $g \in G$.
As $X$ has $\Delta$-propagation,
we deduce that $p \in X\vert_{\Omega}$. Thus, there is an element $x \in X$ such that
  $x\vert_{\Omega_1} = x_1\vert_{\Omega_1}$ and $x\vert_{\Omega_2} = x_2\vert_{\Omega_2}$.
This shows that $X$ is $\Delta$-irreducible and hence strongly irreducible.
 \end{proof}

\begin{remark}
  In \cite[Section 4]{fiorenzi-strongly}, Fiorenzi gave an example of a strongly irreducible subshift of finite type 
$X \subset \{0,1\}^\Z$ without bounded propagation, namely the subshift admitting $\{010,111\}$ as a defining set of forbidden words. Note that $X$ is quiescent since the identically-$0$ configuration is in $X$.
\end{remark}

\begin{remark} 
It is true that any subshift which is a factor of a strongly irreducible subshift is itself strongly irreducible (see \cite[Proposition 3.4]{cc-myhill}).
On the other hand, a subshift which is a factor of a subshift with bounded propagation may fail to have  bounded propagation.
For example, the even subshift $X \subset \{0,1\}^\Z$ (i.e., the subshift formed by  all bi-infinite sequences of $0$s and $1$s in which every chain of $1$s which is bounded by two $0$s has even length) is sofic.
Indeed, $X$
is a factor of the golden mean  subshift $Y \subset \{0,1\}^\Z$. 
A factor map $\tau \colon Y \to X$ is obtained by associating with each sequence 
$y \in Y$ the sequence $\tau(y) \in X$ deduced from $y$ by replacing each occurrence of the word $10$ by the word $11$.
The even subshift does not have bounded propagation since it is not of finite type.
 \end{remark}

\section{Proof of Theorem \ref{t:density-periodic}}
\label{sec:density}

 Choose a periodic configuration $x_0 \in X$ and a finite-index normal subgroup  $K$ of $G$ such that $K \subset \Stab(x_0)$.
\par
  Let $\Delta$ be a finite subset of $G$ such that
$X$ is $\Delta$-irreducible. Up to enlarging $\Delta$ if necessary, we can further assume that $\Delta$ is  a defining window for $X$ and that we have $1_G \in \Delta$.
\par
 Let $x \in X$ and let $\Omega$ be a finite subset of $G$.
Let us show that there exists a periodic configuration $y \in X$ which coincides with $x$ 
on $\Omega$. This will prove the density of periodic configurations in $X$.
\par
Consider the finite subsets   of $G$
 defined  by 
  $$
  \Omega_1 := \Omega^{+\Delta} =\Omega\Delta^{-1} \quad \text{and} \quad  
  \Omega_2 := \Omega_1^{+\Delta^{-1}\Delta} =  \Omega_1\Delta^{-1}\Delta.
  $$
 Observe that we have $\Omega \subset \Omega_1 \subset \Omega_2$ since $1_G \in \Delta$.
 As $X$ is $\Delta$-irreducible, we can find a configuration $z \in X$ 
which  coincides with $x$ on $\Omega$ and with $x_0$ on $\Omega_2 \setminus \Omega_1$.
\par 
Since $G$ is residually finite, we can find a finite-index normal subgroup $L$ of $G$
such that the restriction to $\Omega_2$ of the canonical epimorphism $\rho_L \colon G \to G/L$ is injective.
Consider now the finite-index normal subgroup $H$ of $G$ given by $H := K \cap L$.
Let $\rho_H \colon G \to G/H$ denote the canonical group  epimorphism. 
Define the configuration $y   \in A^G$ by
 $$
y(g) =
\begin{cases}
z(g') &\text{ if  } \rho_H(g)  = \rho_H(g') \text{  for some  } g' \in \Omega_2, \\
x_0(g)  & \text{  otherwise.} 
\end{cases}
$$
First observe that $y$ is well defined since the restriction to $\Omega_2$ of $\rho_H$ is injective.
It is clear that  $h y = y$ for all $h \in H$ so that $y$ is periodic.
 On the other hand, the configurations $y$ and $x$ coincide on $\Omega$, 
since $y$ coincides with $z$  on $\Omega_2$ and $z$ coincides with $x$ on $\Omega \subset \Omega_2$.
Let us prove that $y \in X$.
As $X$ is of finite type with defining window $\Delta$,
it suffices to show that, for each $g \in G$, there exists a configuration $w_g \in X$ such that $y$ coincides with $w_g$ on $g\Delta$.
If the set $\rho_H(g\Delta)$ does not meet $\rho_H(\Omega_1)$, then $y$ coincides with $x_0$  
on $g\Delta$ and we can take $w_g = x_0$.
Suppose now that $\rho_H(g\Delta)$ meets $\rho_H(\Omega_1)$.
This means that there exist $h \in H$ and $\delta_0 \in \Delta$ such that 
$h^{-1}g\delta_0 \in  \Omega_1$. 
Then, for all $\delta \in \Delta$, setting  $k := g\delta$, we have that
$$
h^{-1}k =   (h^{-1}g\delta_0)\delta_0^{-1}\delta \in \Omega_1\Delta^{-1}\Delta = \Omega_2.
$$
 We deduce that
 $$
 y(k) = h y(k) = y(h^{-1}k) = z(h^{-1}k) = hz(k).
 $$
 Therefore, we can take $w_g = h z$ in that case.
 This shows that $y \in X$ and completes the proof that periodic configurations are dense in $X$. 
\qed

\section{W-subshifts}
\label{s:W-subshifts}

In this section we introduce a class of irreducible subshifts over $\Z$, the class of W-subshifts,
and prove that it strictly contains the class of irreducible sofic subshifts and the class of strongly irreducible subshifts.

Let us first introduce some additional notation and recall the definition of the language associated with a subshift over $\Z$.
\par
 Let $A$ be a finite set.
We denote by  $A^*$ the free monoid based on $A$.
Thus, $A^*$ is the set  consisting of all finite words  $w = a_1 a_2 \cdots a_n$, where $n \geq 0$ and $a_i \in A$ for $1 \leq i \leq n$, 
and the monoid operation on $A^*$ is the concatenation  of words.
The \emph{length} of the word $w = a_1 a_2 \cdots a_n$ is the integer $\vert w \vert := n$.
 The identity element in $A^*$ is the empty word.  It is the only word of length $0$.
\par
Given a word $w = a_1 a_2 \cdots a_n \in A^*$  of length $n$, we define a periodic configuration $w^\infty \in A^\Z$   by setting
$w^\infty(i + k n) = a_i$ for all  $1 \leq i \leq n$ and $k \in \Z$.
\par
 One  says that a word $w \in A^*$ appears as a \emph{subword} of a configuration $x \in A^\Z$ if either $w$ is the empty word or there exist $i,j \in \Z$ with $i \leq j$ such that
$w = x(i)x(i+1) \cdots x(j) $.
\par
 Consider now a subshift $X \subset A^\Z$.
The \emph{language} of $X$ is the subset $L(X) \subset A^*$  consisting of all words $w \in A^*$
such that $w$ appears as a subword of some configuration in $X$.
\par
One says that a subshift $X \subset A^\Z$ is \emph{irreducible} if for all words $u,v \in L(X)$ there exists a word $w \in L(X)$ such that $uwv \in L(X)$.
It is clear that every topologically mixing subshift over $\Z$ is irreducible.

\begin{proposition}
\label{p:conditions-W}
Let $A$ be a finite set and let $X \subset A^\Z$ be a subshift.
Let $n_0 \geq 0$ be an integer.
Then the following conditions are equivalent:
\begin{enumerate}[\rm (a)]
 \item
 for all $u,v \in L(X)$
 there exists a word $c \in L(X)$ with length $\vert c \vert \leq n_0$ such that $ucv \in L(X)$; 
\item
the subshift $X$ is irreducible and 
for every $u \in L(X)$ there exists a word $c \in L(X)$ with length $\vert c \vert \leq n_0$ such that $ucu \in L(X)$.
\end{enumerate}
\end{proposition}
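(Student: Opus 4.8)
The plan is to prove the two implications separately; the direction (b) $\Rightarrow$ (a) is the one that requires an idea, while (a) $\Rightarrow$ (b) is immediate from the definitions. Throughout I would use, without further comment, the elementary fact that $L(X)$ is closed under passing to subwords: if $w$ occurs as a subword of some $x \in X$ and $w'$ is a subword of $w$, then $w'$ occurs in $x$ as well, so $w' \in L(X)$. I would also dispose first of the trivial case $X = \varnothing$, in which $L(X) = \varnothing$ and both (a) and (b) hold vacuously; so one may assume $X \neq \varnothing$, and then the empty word lies in $L(X)$, which takes care of all the degenerate situations where $u$, $v$, or some auxiliary word below is empty.

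For (a) $\Rightarrow$ (b): given $u,v \in L(X)$, the word $c$ produced by (a) is in particular an element of $L(X)$ with $ucv \in L(X)$, which is precisely the defining condition of irreducibility; and applying (a) to the pair $(u,u)$ gives a word $c \in L(X)$ with $\vert c \vert \leq n_0$ and $ucu \in L(X)$. Hence (b) holds.

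For (b) $\Rightarrow$ (a): let $u,v \in L(X)$. First apply irreducibility of $X$ to the \emph{ordered} pair $(v,u)$ to obtain a word $w \in L(X)$ with $vwu \in L(X)$, and set $u' := vwu \in L(X)$. Then apply the second part of (b) to the word $u'$: there is a word $c \in L(X)$ with $\vert c \vert \leq n_0$ and $u' c u' \in L(X)$. Now
$$
u' c u' = v\, w\, u\, c\, v\, w\, u ,
$$
so the word $ucv$ occurs as a subword of $u' c u' \in L(X)$ — namely the segment consisting of the first copy of $u$, the block $c$, and the second copy of $v$. Therefore $ucv \in L(X)$; since $c$ is a subword of $u'cu'$ we also have $c \in L(X)$, with $\vert c \vert \leq n_0$. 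This is exactly condition (a).

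The single point that makes the argument work — and the step I would flag as the crux — is this ``doubling'' device: in order to join $u$ to $v$ by a word of bounded length, one first joins $v$ to $u$ (in that order) by an \emph{a priori} unbounded word $w$ supplied by plain irreducibility, thereby manufacturing the single word $u' = vwu$; invoking the bounded-length self-return $u'cu' \in L(X)$ then automatically produces the required pattern $ucv$. Beyond this, the proof is purely formal, and I would only need to double-check that the length bound $\vert c \vert \leq n_0$ is carried through unchanged, which it is since the very same $c$ is used.
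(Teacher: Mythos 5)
Your proof is correct and follows essentially the same route as the paper: the (a) $\Rightarrow$ (b) direction is immediate, and for (b) $\Rightarrow$ (a) the paper likewise applies irreducibility to the pair $(v,u)$ to form $vwu$, invokes the bounded self-return to get $vwucvwu \in L(X)$, and extracts $ucv$ as a subword. The only additions in your write-up (the empty-subshift and empty-word remarks) are harmless and do not change the argument.
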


\begin{proof}
Condition (a) trivially implies (b).
Conversely, suppose that condition (b) is satisfied and let $u,v \in L(X)$.
By irreducibility, we can find a word $w \in L(X)$ such that $vwu \in L(X)$.
Moreover, there exists a word $c \in L(X)$ with length $\vert c \vert \leq n_0$ such that $vwucvwu \in L(X)$.
This implies $ucv \in L(X)$. Thus, condition (a) is satisfied.
\end{proof}

\begin{definition}
\label{def:weiss-subshift}
Let $A$ be a finite set. 
We say that a subshift $X \subset A^Z$ is a \emph{W-subshift} if   there exists an integer
 $n_0 \geq 0$ satisfying one of the two equivalent conditions of Proposition \ref{p:conditions-W}. 
  \end{definition}

The following statement is well known.

\begin{lemma}
\label{l:property-irr-sofic}
Let $A$ be a finite set and let $X \subset A^\Z$ be an irreducible sofic subshift.
Then there exists an integer $n_0 \geq 0$ satisfying the following property:
for every $u \in L(X)$, there exists a word $c \in L(X)$ with length $\vert c \vert \leq n_0$ such that the periodic configuration $(uc)^\infty$ is in $X$.
\end{lemma}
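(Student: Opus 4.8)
The plan is to exploit the standard representation of an irreducible sofic subshift $X \subset A^\Z$ by a finite strongly connected labeled graph (a labeled digraph $\mathcal{G}$ whose edge-labelings over all bi-infinite paths give exactly $X$), and to use a ``synchronizing'' or ``magic'' word argument. First I would pick such a presentation $\mathcal{G} = (Q, E)$ with vertex set $Q$ and label map $\lambda \colon E \to A$, strongly connected, so that $L(X)$ consists precisely of the labels of finite paths in $\mathcal{G}$. Since $\mathcal{G}$ is finite and strongly connected, set $N := \vert Q \vert$ and note that between any two states there is a path of length at most $N - 1$, hence a word realizing that path lies in $L(X)$.

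Now let $u \in L(X)$ be arbitrary; choose a path $\pi$ in $\mathcal{G}$ spelling $u$, say from state $p$ to state $q$. The key step is to close this path up to a cycle whose labeling, read periodically, genuinely lies in $X$. By strong connectedness choose a path from $q$ back to $p$ of length at most $N-1$; let $c \in L(X)$ be its label, so $\vert c \vert \leq N - 1$. Then $uc$ is the label of a closed path (a cycle) at $p$ in $\mathcal{G}$, and concatenating this cycle with itself indefinitely in both directions produces a bi-infinite path in $\mathcal{G}$ whose label is exactly the periodic configuration $(uc)^\infty$. Since every bi-infinite path label in $\mathcal{G}$ belongs to $X$, we conclude $(uc)^\infty \in X$. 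Taking $n_0 := N - 1$, which depends only on $X$ (indeed only on the chosen presentation), gives the desired uniform bound.

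The main obstacle — really the only subtle point — is making sure that the periodic configuration $(uc)^\infty$ actually lies in $X$, not merely that $uc \in L(X)$: a word being in the language only guarantees it extends to a two-sided configuration in $X$, whereas here we need the specific periodic extension. This is why I route the argument through a graph presentation rather than working purely combinatorially with $L(X)$: the cycle structure of $\mathcal{G}$ is exactly what upgrades ``$uc$ is a path label'' to ``$(uc)^\infty$ is a bi-infinite path label, hence in $X$.'' One should also recall (or cite) the basic fact from symbolic dynamics that every irreducible sofic subshift admits a finite strongly connected presentation, e.g. via its minimal right-resolving (Fischer) cover or simply by taking a presentation and passing to an irreducible component that still surjects onto $X$; with that in hand the rest is the short path-closing argument above.
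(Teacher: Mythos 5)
Your proof is correct and follows essentially the same route as the paper: present the irreducible sofic shift $X$ by a strongly connected labeled graph (Lind--Marcus, Lemma 3.3.10), close up a path spelling $u$ by a return path of bounded length, and read off the periodic bi-infinite path label $(uc)^\infty \in X$, with $n_0$ a bound on return-path lengths. The only quibble is your parenthetical that one can get an irreducible presentation by ``passing to an irreducible component that still surjects onto $X$'' of an arbitrary presentation, which is not automatic and is exactly the point of the cited lemma (or of the Fischer cover, which you also invoke correctly).
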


\begin{proof}
By Lemma 3.3.10 in \cite{lind-marcus},
there exists a strongly connected finite directed graph $\GG$ whose edges are labelled by elements of $A$ such that $X$ is the set of labels of bi-infinite paths in $\GG$.
If $v_1$ and $v_2$ are vertices of $\GG$, denote by $N(v_1,v_2)$ the minimal length of a finite directed path going from  $v_1$ to $v_2$. Then $X$ clearly satisfies the statement by taking $n_0 = \max N(v_1,v_2)$, where $v_1$ and $v_2$ run over all vertices of $\GG$.
\end{proof}

\begin{proposition}
\label{p:irr-sft-W}
Let $A$ be a finite set.
Then every irreducible sofic subshift $X \subset A^\Z$ is a W-subshift.
In particular, every irreducible subshift of finite type $X \subset A^\Z$ is a W-subshift.
\end{proposition}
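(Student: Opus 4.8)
The plan is to deduce the W-subshift property from Lemma \ref{l:property-irr-sofic} by checking condition (b) of Proposition \ref{p:conditions-W}. Let $X \subset A^\Z$ be an irreducible sofic subshift and let $n_0 \geq 0$ be the integer furnished by Lemma \ref{l:property-irr-sofic}, so that for every $u \in L(X)$ there is a word $c \in L(X)$ with $\vert c \vert \leq n_0$ such that the periodic configuration $(uc)^\infty$ lies in $X$. First I would observe that $X$ is irreducible by hypothesis, so the first half of condition (b) is immediate. For the second half, fix $u \in L(X)$ and take the word $c$ as above. Since $(uc)^\infty \in X$, every finite subword of $(uc)^\infty$ belongs to $L(X)$; in particular the word $ucuc$ appears as a subword of $(uc)^\infty$, hence $ucuc \in L(X)$, and therefore its subword $ucu$ also lies in $L(X)$. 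Thus $c$ witnesses condition (b) with the same bound $n_0$, and $X$ is a W-subshift by Definition \ref{def:weiss-subshift}.

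For the ``in particular'' clause, I would simply recall that every subshift of finite type over any group is sofic (it is a factor of itself via the identity map), so an irreducible subshift of finite type over $\Z$ is an irreducible sofic subshift, and the first part applies.

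I do not anticipate a genuine obstacle here: the only subtlety is the bookkeeping passage from ``$(uc)^\infty \in X$'' to ``$ucu \in L(X)$'', which just uses that $L(X)$ is closed under taking subwords and that $ucu$ visibly occurs inside one period-and-a-bit of $(uc)^\infty$. One should be slightly careful about the degenerate case $u$ empty (or $c$ empty), but in those cases the required conclusion $ucu \in L(X)$ is trivial or follows directly from $(uc)^\infty \in X$. Everything else is contained in the cited results.
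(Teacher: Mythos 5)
Your proposal is correct and follows essentially the same route as the paper: it invokes Lemma \ref{l:property-irr-sofic} and verifies condition (b) of Proposition \ref{p:conditions-W} by noting that $ucu$ appears as a subword of $(uc)^\infty \in X$, with the "in particular" clause handled by the standard fact that subshifts of finite type are sofic. The extra remarks on degenerate cases and subword closure are fine but add nothing beyond the paper's one-line argument.
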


\begin{proof}
This is an immediate consequence of Lemma \ref{l:property-irr-sofic} since the word $ucu$ appears 
in the configuration $(uc)^\infty$, so that condition (b) of Proposition \ref{p:conditions-W}
is satisfied.
 \end{proof}

 \begin{proposition}
\label{p:si-W}
Let $A$ be a finite set.
Then every strongly irreducible subshift  $X \subset A^\Z$ is a W-subshift.
\end{proposition}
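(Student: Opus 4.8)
The plan is to verify condition (a) of Proposition \ref{p:conditions-W} directly, taking for $n_0$ the ``gap'' associated with the strong irreducibility of $X$. First I would pick a finite subset $\Delta \subset \Z$ such that $X$ is $\Delta$-irreducible and enlarge it to a symmetric interval $\Delta = \{-N, -N+1, \dots, N\}$ for some integer $N \geq 0$; this is harmless since $\Delta$-irreducibility is inherited by larger finite subsets. Because $\Delta = -\Delta$ (in additive notation), the $\Delta$-neighborhood of any $\Omega \subset \Z$ is simply $\Omega^{+\Delta} = \Omega + \Delta$. I claim that $X$ satisfies condition (a) of Proposition \ref{p:conditions-W} with $n_0 := N$.

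Let $u, v \in L(X)$. If one of $u, v$ is the empty word the statement is trivial with $c$ the empty word, so assume $p := \vert u \vert \geq 1$ and $q := \vert v \vert \geq 1$. Since $u$ appears as a subword of some configuration in $X$ and $X$ is shift-invariant, I can choose $x_1 \in X$ with $u = x_1(1) x_1(2) \cdots x_1(p)$; likewise I can choose $x_2 \in X$ with $v = x_2(p+N+1) x_2(p+N+2) \cdots x_2(p+N+q)$, deliberately placing the occurrence of $v$ at distance $N+1$ to the right of the occurrence of $u$. Setting $\Omega_1 := \{1, \dots, p\}$ and $\Omega_2 := \{p+N+1, \dots, p+N+q\}$, we then have $\Omega_1^{+\Delta} = \{1-N, \dots, p+N\}$, which is disjoint from $\Omega_2$ since $p+N < p+N+1$. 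Hence $\Delta$-irreducibility yields a configuration $x \in X$ agreeing with $x_1$ on $\Omega_1$ and with $x_2$ on $\Omega_2$. Reading off $c := x(p+1) x(p+2) \cdots x(p+N)$, a word in $L(X)$ of length exactly $N = n_0$, we get $ucv = x(1) x(2) \cdots x(p+N+q) \in L(X)$, which is condition (a).

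I do not expect a genuine difficulty here: the whole argument is bookkeeping built on the definition of $\Delta$-irreducibility. The points that require a little attention are (i) reducing $\Delta$ to a symmetric interval and correctly identifying $\Omega^{+\Delta}$ with $\Omega + \Delta$, (ii) choosing the translates of $x_1$ and $x_2$ so that $u$ and $v$ occupy consecutive intervals separated by a gap of exactly $N$ with disjoint $\Delta$-neighborhoods, and (iii) disposing of the degenerate cases in which $u$ or $v$ is empty. Once these are handled, condition (a) of Proposition \ref{p:conditions-W} holds with $n_0 = N$, so $X$ is a W-subshift, as claimed.
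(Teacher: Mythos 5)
Your proof is correct and follows essentially the same route as the paper: the paper also enlarges $\Delta$ to a symmetric interval $\{-n_0,\dots,n_0\}$ and uses $\Delta$-irreducibility to glue occurrences of $u$ and $v$ separated by a gap of length $n_0$, thereby verifying condition (a) of Proposition \ref{p:conditions-W}; you have merely written out the bookkeeping (choice of $\Omega_1$, $\Omega_2$, and the identification $\Omega^{+\Delta}=\Omega+\Delta$) that the paper leaves implicit.
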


\begin{proof}
Suppose that $X \subset A^\Z$ is strongly irreducible.
 Choose a positive integer   $n_0$ large enough  so that $X$ is $\Delta$-irreducible for 
$\Delta = \{-n_0, - n_0 + 1, \dots, n_0\}$.
Then, for all $u,v  \in L(X)$, we can find $c \in L(X)$ of length $n_0$ such that $ucv \in L(X)$.
Therefore, condition (a) of Proposition \ref{p:conditions-W} is satisfied. 
\end{proof}

\begin{example}
\label{e:not-s-i}
The subshift $X \subset \{0,1\}^\Z$ that is reduced to the two periodic configurations $(01)^\infty$ and $(10)^\infty$ is an irreducible subshift of finite type.
This implies that  $X$ is a W-subshift.
However,  
$X$  is not topologically mixing and therefore not strongly irreducible. 
Thus, the class of strongly irreducible subshifts is strictly contained in the class of W-subshifts. 
 \end{example}

\begin{example}
\label{e:not-sofic}
Consider the subshift $X \subset \{0,1,2\}^\Z$
consisting of all $x \in \{0,1,2\}^\Z$ such that  no words of the form $01^m2^n0$ 
with $m \not= n$
appear in $x$.
Clearly $X$ is strongly irreducible and hence   a W-subshift.
However, it can be shown by means of a pumping argument \cite[Example 3.1.7]{lind-marcus} that $X$ is not sofic. 
It follows that the class of irreducible sofic subshifts over $\Z$ is strictly contained in the class of 
W-subshifts.
 \end{example}

There are W-subshifts which are neither sofic nor strongly irreducible. 
In fact, we have the following result.

\begin{proposition}
\label{p:W-not-is-nor-si}
There exist uncountably many W-subshifts $X \subset \{0,1\}^\Z$ which are neither sofic nor strongly irreducible.
\end{proposition}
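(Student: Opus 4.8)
The strategy is to realise the required family among the \emph{$S$-gap shifts}. For a nonempty set $S \subseteq \{1,2,3,\dots\}$ let $X_S \subseteq \{0,1\}^\Z$ be the subshift consisting of all configurations in which every maximal block of $0$'s that is bounded on both sides by $1$'s has length in $S$. When $S$ is infinite its language is
$$
L(X_S) = \{\, 0^a\, 1\, 0^{s_1}\, 1\, 0^{s_2}\, 1 \cdots 1\, 0^{s_r}\, 1\, 0^b : r \ge 0,\ a,b \ge 0,\ s_i \in S \,\} \cup \{\, 0^a : a \ge 0 \,\}.
$$
I will impose three conditions on $S$: (1) $S$ is \emph{syndetic}, i.e.\ consecutive elements of $S$ differ by at most some integer $M$; (2) every element of $S$ is odd; (3) the indicator sequence $\mathbf 1_S \in \{0,1\}^{\N}$ is not eventually periodic. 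I will show that (1) makes $X_S$ a W-subshift, (2) makes $X_S$ fail to be strongly irreducible, and (3) makes $X_S$ non-sofic. An uncountable supply of such sets is obtained by putting $m_k := 4^k + 7$ for $k \ge 1$ (an increasing sequence of odd integers $\ge 11$ with $m_{k+1}-m_k \to \infty$) and, for each $\omega = (\omega_k)_{k\ge 1} \in \{0,1\}^{\N}$ with infinitely many zeros, setting
$$
S_\omega := \{1\} \cup \bigl( \{3,5,7,\dots\} \setminus \{\, m_k : \omega_k = 0 \,\} \bigr).
$$
Each $S_\omega$ clearly satisfies (1) (with $M = 4$) and (2); it satisfies (3) because the removed set $\{m_k : \omega_k = 0\}$ is infinite with unbounded gaps, so neither it nor $\mathbf 1_{S_\omega}$ can be eventually periodic. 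Moreover $\omega \mapsto X_{S_\omega}$ is injective, since $1\,0^{m_k}\,1 \in L(X_{S_\omega})$ if and only if $m_k \in S_\omega$. As there are uncountably many admissible $\omega$, this will prove the proposition.

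The one substantial point is that a syndetic infinite $S$ makes $X_S$ a W-subshift; I will verify condition (a) of Proposition \ref{p:conditions-W}. Let $u, v \in L(X_S)$, and suppose first that each of $u, v$ contains a $1$; write $u = u'\, 1\, 0^{j}$, where $u'1$ ends at the last $1$ of $u$, and $v = 0^{k}\, 1\, v'$. I seek $c$ of the form $c = 0^{\,s_1 - j}\, 1\, 0^{\,s_2 - k}$ with $s_1, s_2 \in S$, $s_1 \ge j$, $s_2 \ge k$: for then $u\, c\, v = u'\, 1\, 0^{s_1}\, 1\, 0^{s_2}\, 1\, v' \in L(X_S)$, and $|c| = s_1 + s_2 - j - k + 1$. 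Take $s_1$ to be the least element of $S$ with $s_1 \ge j$; syndeticity gives $s_1 - j \le M$. Then $s_2$ is forced to equal $j + |c| + k - 1 - s_1$, so as $|c|$ ranges over $M+1$ consecutive integers (all chosen $\ge M+2$) the corresponding candidates for $s_2$ range over $M+1$ consecutive integers $\ge 1$, which must contain an element of $S$; choosing $|c|$ accordingly (and noting $s_2 \ge k$ once $|c|$ is large enough) yields a suitable $c$ with $|c| \le 2M+2$, a bound depending only on $S$. The cases where $u$ or $v$ contains no $1$ are handled by an easier variant (one can take $c$ of the form $1\,0^b$, or empty). Hence condition (a) holds with $n_0 = 2M+2$.

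For the remaining two properties: if every element of $S$ is odd, take $\Omega_1 = \{0\}$, $\Omega_2 = \{D\}$ with $D$ odd, and $x_1, x_2 \in X_S$ with $x_1(0) = x_2(D) = 1$; any $x \in X_S$ with $x(0) = x(D) = 1$ has its $1$'s in $\{0, 1, \dots, D\}$ splitting that interval into gaps $s_1, \dots, s_r \in S$, so $D = \sum_i (s_i + 1)$ is a sum of even integers, which is absurd. Since $D$ may be taken arbitrarily large, no finite $\Delta$ can witness $\Delta$-irreducibility, so $X_S$ is not strongly irreducible (indeed not even topologically mixing). If $\mathbf 1_S$ is not eventually periodic, $X_S$ is not sofic: a sofic subshift has only finitely many follower sets (\cite{lind-marcus}), but the follower set of the word $1\,0^{j}$ in $L(X_S)$ and the ``tail'' $T_j := \{\, s - j : s \in S,\ s \ge j \,\}$ determine each other (via the test words $0^{\delta}\,1$), while $T_{j+1}$ is a fixed function of $T_j$ and $j \in S \iff 0 \in T_j$; hence finitely many follower sets would force $(T_j)_{j \ge 0}$, and therefore $\mathbf 1_S$, to be eventually periodic. (Alternatively, a pumping argument in the style of Example \ref{e:not-sofic} applies.)

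The main obstacle is the uniform W-estimate of the second paragraph: the connecting word $c$ must have length bounded \emph{independently of $u$ and $v$}, in particular when $u$ ends with, or $v$ begins with, an arbitrarily long run of $0$'s, and it is precisely there that syndeticity of $S$ is used. Everything else — the description of $L(X_S)$, the non-mixing argument, and the follower-set (or pumping) argument for non-soficity — is routine.
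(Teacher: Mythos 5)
Your proposal is correct in substance, and it differs from the paper's proof at one genuinely different point. The paper also works with gap-type subshifts built over syndetic sets of odd integers (there: runs of $1$'s between $0$'s, with lengths in $S(\sigma)$ for $\sigma\in\{2,4\}^\N$), it verifies the W-property by the same syndeticity idea (in fact more economically: the connecting word can be taken to be a single run, giving $n_0=3$, whereas you insert an extra $1$ and control two new gaps $s_1,s_2$), and it excludes strong irreducibility by the same parity obstruction. Where you genuinely diverge is soficity: the paper never proves non-soficity of any individual example, but simply notes that there are only countably many sofic subshifts of $\{0,1\}^\Z$ (each is presented by a finite labelled graph, cf.\ \cite{lind-marcus}), so among uncountably many pairwise distinct W-subshifts that are not strongly irreducible, uncountably many must be non-sofic. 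You instead impose the extra hypothesis that $\mathbf 1_S$ is not eventually periodic and prove directly, via the follower-set criterion and the fact that $T_{j+1}$ is a function of $T_j$ with $j\in S \Leftrightarrow 0\in T_j$, that every $X_{S_\omega}$ is non-sofic. Your route costs an additional argument but yields the stronger conclusion that each member of the family is non-sofic; the paper's counting trick is softer and shorter.

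One point you should tighten. As literally stated, your definition of $X_S$ (``every maximal block of $0$'s bounded on both sides by $1$'s has length in $S$'') does not obviously forbid the word $11$, since adjacent $1$'s enclose no block of $0$'s; under that reading your description of $L(X_S)$ is incomplete and, more seriously, the parity argument fails (two $1$'s at odd distance $D$ could be realized using adjacent $1$'s). Everything you wrote is consistent with the intended convention, so you should make it explicit: define $X_S$ by forbidding all words $10^m1$ with $m\notin S$, \emph{including} $m=0$. With that convention each difference of consecutive $1$'s in $[0,D]$ equals $s_i+1$ with $s_i\in S$ odd, so $D$ is even, and the language description, the W-estimate (where the bound $s_1-j\le M$ also uses $1\in S_\omega$, which holds for your family), and the follower-set computation all go through as written. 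Note that the paper's examples have the analogous feature built in: $0\notin S(\sigma)$, so $00$ is forbidden there, which is exactly what makes its parity argument work.
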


\begin{proof}
We first observe that there are only countably many sofic subshifts $X \subset \{0,1\}^\Z$.
Indeed, each sofic subshift $X \subset \{0,1\}^\Z$ is presented by a finite directed graph
with edges labelled by $0$ and $1$ (cf. \cite[Theorem 3.2.1]{lind-marcus}), and there are only countably many such graphs up to isomorphism.
Thus, it suffices to prove the existence of uncountably many W-subshifts $X \subset \{0,1\}^\Z$ which are not strongly irreducible.

Let $\N$ denote the set of nonnegative integers. With every 
$\sigma \in \{2,4\}^\N$ we associate the infinite set of positive odd integers
$$
S(\sigma) = \{1+\sum_{k=0}^{n-1} \sigma(k): n \in \N\}
$$
and the subshift $X_\sigma \subset \{0,1\}^\Z$ consisting of all $x \in \{0,1\}^\Z$
such that no words of the form $01^m0$ with $m \in \N \setminus S(\sigma)$ appear in $x$.
Observe that $X_\sigma = X_{\sigma'}$ if and only if $\sigma = \sigma'$. Thus the set $\{X_\sigma: \sigma \in \{2,4\}^\N\}$ is uncountable.

Let us fix $\sigma \in \{2,4\}^\N$.
Note that for every $m \in \N$ there exists $p_m \in \{0,1,2,3\}$
such that $m+p_m \in S(\sigma)$.
Let us show that $X_\sigma$ satisfies condition (a) in Proposition \ref{p:conditions-W} with
$n_0 = 3$. Let $u,v \in L(X_\sigma)$.
If $u = 1^m$ or $v = 1^n$, with $m,n \in \N$, we can take $c$ to be the empty word. Otherwise, we have 
$u = u'01^m$ and $v = 1^n0v'$ for some words $u',v' \in \{0,1\}^*$ and $m,n \in \N$, and we can take $c = 1^{p_{m+n}}$. This shows that $X_\sigma$ is a W-subshift.
On the other hand, if we take $u=v=0$, we have $u,v \in L(X_\sigma)$, but every $w \in
\{0,1\}^*$ such that $uwv \in L(X_\sigma)$ must have odd length. It follows that
the subshift $X_\sigma$ is not topologically mixing and therefore not strongly
irreducible.
\end{proof}

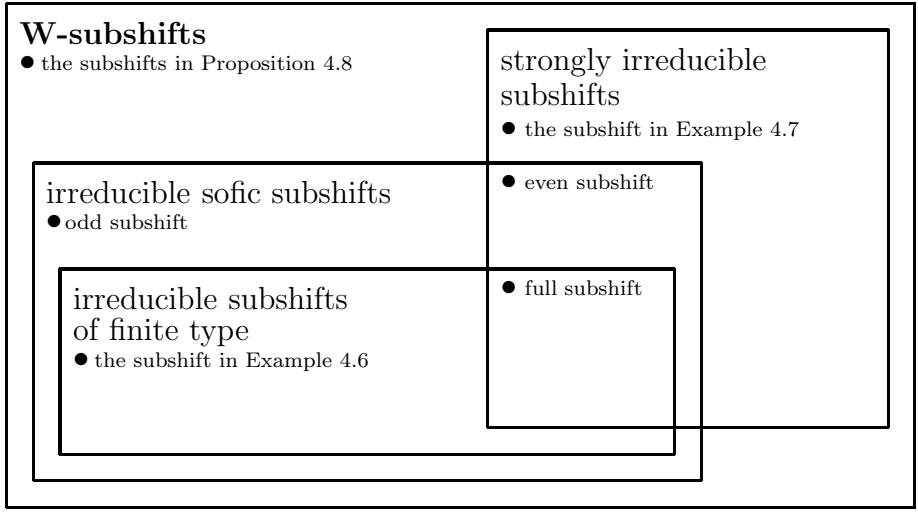
\begin{figure}
\begin{center}
\begin{picture}(360,220)
\thicklines
\put(0,20){\line(1,0){340}}
\put(0,210){\line(1,0){340}}
\put(10,150){\line(1,0){250}}
\put(10,30){\line(1,0){250}}
\put(10,30){\line(0,1){120}}
\put(260,30){\line(0,1){120}}
\put(0,20){\line(0,1){190}}
\put(340,20){\line(0,1){190}}
\put(20,40){\line(1,0){230}}
\put(20,40){\line(0,1){70}}
\put(20,110){\line(1,0){230}}
\put(180,200){\line(1,0){150}}
\put(250,40){\line(0,1){70}}
\put(180,200){\line(0,-1){150}}
\put(180,50){\line(1,0){150}}
\put(330,200){\line(0,-1){150}}
\put(5,195){{\bf W-subshifts}}
\put(15,135){irreducible sofic subshifts}
\put(185,185){strongly irreducible}
\put(185,172){subshifts}
\put(25,95){irreducible subshifts}
\put(25,83){of finite type}
\put(194,140){{\tiny even subshift}}
\put(194,100){{\tiny full subshift}}
\put(185,160){$\bullet$}
\put(194,160){{\tiny the subshift in Example \ref{e:not-sofic}}}
\put(185,140){$\bullet$}
\put(185,100){$\bullet$}
\put(15,125){$\bullet$}
\put(25,73){$\bullet$}
\put(33,73){{\tiny the subshift in Example \ref{e:not-s-i}}}
\put(5,185){$\bullet$}
\put(13,185){{\tiny the subshifts in Proposition \ref{p:W-not-is-nor-si}}}
\put(22,125){{\tiny odd subshift}}
\end{picture}
\end{center}
\caption{Relations among classes of W-subshifts (the \emph{odd subshift} is the
subshift $X \subset \{0,1\}^\Z$ consisting of all configurations $x \in \{0,1\}^\Z$
such that no words of the form $01^n0$ with $n$ even appear in $x$)}
\label{fig:relations}
\end{figure}

\section{Density of periodic configurations in W-subshifts}
\label{sec:strongly-over-Z}

 An immediate consequence of  Lemma \ref{l:property-irr-sofic}
is that every irreducible sofic subshift over $\Z$ contains a dense set of periodic configurations (cf. \cite{fiorenzi-periodic}).
More generally, we have the following result.

\begin{theorem}
\label{density-for-W}
Let $A$ be a finite set and 
let   $X \subset A^\Z$ be a W-subshift.
Then $X$  contains a dense set of periodic configurations.
\end{theorem}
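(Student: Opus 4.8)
The plan is to reduce the theorem to the statement that \emph{every word $u \in L(X)$ occurs as a subword of some periodic configuration of $X$}. Density then follows at once: if $x \in X$ and $\Omega \subset \Z$ is finite, enlarge $\Omega$ to an interval $\{a,\dots,b\}$, set $u = x(a)x(a+1)\cdots x(b) \in L(X)$, take a periodic $y \in X$ containing $u$, and translate $y$ so that its occurrence of $u$ lies on $\{a,\dots,b\}$; by shift-invariance this translate is again a periodic configuration of $X$, and it agrees with $x$ on $\Omega$.

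So fix $u \in L(X)$, write $m = |u|$, and let $W = \{c \in L(X) : |c| \le n_0\}$, which is a finite set. Iterating condition (a) of Proposition~\ref{p:conditions-W} (at each step connecting the word built so far to a fresh copy of $u$) yields, for every $L$, a word $u c_1 u c_2 u \cdots c_L u \in L(X)$ with $c_1,\dots,c_L \in W$, hence a configuration of $X$ in which $u$ recurs with consecutive occurrences at distance at most $m + n_0$. A routine compactness argument then produces $x_\infty \in X$ enjoying this recurrence property on all of $\Z$, so that $x_\infty$ is a bi-infinite concatenation of ``return blocks'' drawn from a fixed finite set of words of length at most $m+n_0$. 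The remaining goal is to extract from $x_\infty$ a single word $w$ containing $u$ such that $w^k \in L(X)$ for arbitrarily large $k$: since $X$ is closed, every finite subword of $w^\infty$ then lies in $L(X)$, so $w^\infty \in X$ is a periodic configuration containing $u$, as required.

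This extraction is the heart of the matter, and the point where the uniform bound $n_0$ — through the finiteness of $W$ — is indispensable. The naive attempt fails: picking $c$ with $ucu \in L(X)$ from condition (b) does \emph{not} give $(uc)^\infty \in X$, because in a general (non-sofic) subshift $ucu \in L(X)$ does not force $ucuc \in L(X)$. Instead one must use the bounded-gap irreducibility to \emph{close a loop}: running the construction while monitoring the finite amount of admissibility data that is actually available at each stage, one arranges that the sequence of return blocks becomes eventually periodic, and then takes $w$ to be one period. In the favorable case that $X$ admits a synchronizing word $s$, this step is transparent: by irreducibility place a copy of $s$ on each side of $u$ to obtain $s\,r\,u\,r'\,s \in L(X)$, and then repeatedly pivoting on $s$ promotes this to $(s\,r\,u\,r')^k \in L(X)$ for all $k$. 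I expect the loop-closing step to be the only real obstacle; the reduction, the iterated connecting, the compactness limit, and the final appeal to the closedness of $X$ are all routine, and the substance of the theorem is precisely that the bounded-gap irreducibility built into the definition of a W-subshift is rigid enough to force the return-block pattern to be made periodic.
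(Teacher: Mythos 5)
Your reduction (it suffices to put every $u \in L(X)$ inside a periodic configuration), the iterated connecting giving words $uc_1uc_2u\cdots c_Lu \in L(X)$ with $c_i$ drawn from the finite set $W$, and the compactness limit producing a configuration with syndetic occurrences of $u$ are all correct. But the step you yourself call ``the heart of the matter'' is missing, and it does not follow from what you have set up. Knowing that some $x_\infty \in X$ is a bi-infinite concatenation of return blocks from a finite set gives, by pigeonhole, blocks that recur infinitely often, but no mechanism for making the block sequence \emph{eventually periodic}: in a non-sofic W-subshift the admissible continuations can depend on unboundedly long past context, so ``monitoring the finite amount of admissibility data available at each stage'' is not a finite-state argument and does not close the loop. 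Your fallback via a synchronizing word $s$ only treats synchronized systems; a general W-subshift (e.g.\ the non-sofic examples of Proposition \ref{p:W-not-is-nor-si}) is not assumed to have one. So the proposal identifies the obstacle correctly but does not overcome it, and this is precisely the nontrivial content of the theorem.

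The paper's proof resolves this with an extremal trick you did not find. For $u \in L(X)$ set $F(u) = \{c \in A^* : \vert c\vert \le n_0,\ ucu \in L(X)\}$, and choose $u_0$ with $\vert F(u_0)\vert$ minimal (nonempty by condition (b)); fix $c_0 \in F(u_0)$. If $u_0vu_0 \in L(X)$, then any $c \in F(u_0vu_0)$ satisfies $u_0vu_0cu_0vu_0 \in L(X)$, which contains $u_0cu_0$, so $F(u_0vu_0) \subset F(u_0)$, and minimality forces equality; hence the \emph{same} $c_0$ works after every doubling. Inductively $v_1 = u_0vu_0$, $v_{k+1} = v_kc_0v_k$ all lie in $L(X)$, and since $v_{k+1} = (u_0vu_0c_0)^{2^k-1}u_0vu_0$, one gets $(u_0vu_0c_0)^n \in L(X)$ for all $n$, whence $(u_0vu_0c_0)^\infty \in X$ by closedness; an arbitrary $w \in L(X)$ is then handled by irreducibility, writing $u_0u_1wu_2u_0 \in L(X)$ and applying the above to $v = u_1wu_2$. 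This bypasses entirely the compactness limit and the return-block analysis on which your sketch stalls.
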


\begin{proof}
 (Benjy Weiss)
It suffices to show that every word  $w \in L(X)$ appears as a subword of some periodic configuration of $X$.
\par
As $X$ is a W-subshift, we can find an integer $n_0 \geq 0$ satisfying condition (b) of Proposition \ref{p:conditions-W}.. 
For each $u\in L(X)$, let $F(u)$
denote the finite set consisting of all words $c \in A^*$ with length $\vert c \vert \leq n_0$ such that  $ucu \in L(X)$.
 Take   $u_0\in L(X)$ such that $F(u_0)$ has minimal cardinality.
Observe that $F(u_0)$ is not empty by our hypothesis on $n_0$. 
 Let us fix  an arbitrary word $c_0 \in F(u_0)$.
\par
Suppose that  $v \in A^*$ is such that  $u_0vu_0\in L(X)$.
 We then have $F(u_0vu_0 ) \subset F(u_0)$ and hence  
\begin{equation*}
\label{e:fillers}
F(u_0 v u_0) = F(u_0)
\end{equation*}
by minimality.
In particular, we have $c_0 \in F(u_0 v u_0)$, that is, $u_0 v u_0 c_0 u_0 v u_0 \in L(X)$.
By induction, it follows  that the sequence of words $(v_k)_{k \geq 1}$ defined by 
$v_1 = u_0 v u_0$ and $v_{k + 1} = v_k c_0 v_k$ for $k \geq 1$, satisfies $v_k \in L(X)$ for all $k \geq 1$.
As
$$
v_{k + 1} = (u_0vu_0c_0)^{2^k - 1}u_0 v u_0
$$
for all $k \geq 1$,
we deduce that
$$
(u_0 v u_0 c_0)^n \in L(X)
$$
for all $n \geq 0$.
As $X$ is  shift-invariant and closed in $A^\Z$, this implies that the periodic configuration $(u_0 v u_0 c_0)^\infty$ is in $X$.
This shows in particular that $v$ appears as a subword of a periodic configuration in $X$.
\par 
Consider now an arbitrary word $w   \in L(X)$. 
Since $X$ is  irreducible, 
we can find 
$u_1 \in A^*$ such that $u_0u_1w \in L(X)$ and $u_2 \in L(X)$ such that $ u_0 u_1 w u_2 u_0  = (u_0 u_1 w) u_2 u_0 \in L(X)$.
Applying our previous argument to $v = u_1 w u_2$, we conclude that $u_1 w u_2$ and hence $w$ appears as a subword of a periodic configuration of $X$. 
This shows  that the periodic configurations are dense in $X$.
 \end{proof}

 \begin{proof}[Proof of Theorem \ref{t:benjy}]
The statement immediately follows from
Proposition \ref{p:si-W} and Theorem \ref{density-for-W}.
 \end{proof}

\section{Concluding remarks and open questions}
\label{s:questions}

 As already mentioned in the introduction,
 the  question whether every strongly irreducible subshift of finite type over $\Z^d$ 
 contains a dense set of periodic configurations 
 remains open for $d \geq 3$. It may be extended as follows.
  
\begin{question}
Let $G$ be a residually finite group. 
 Does every strongly irreducible subshift of finite type over $G$   contain a dense set of periodic configurations?
\end{question}
Note that, by Theorem \ref{t:density-periodic},
it would suffice to prove that such a subshift contains a periodic configuration unless it is  empty.
\par
  Let us observe that  Theorem \ref{t:density-periodic}   becomes false if the hypothesis saying that $X$ is strongly irreducible is replaced by the weaker condition that $X$ is topologically mixing.
Indeed, Weiss \cite[p. 358]{weiss-sgds} described a quiescent topologically mixing subshift of finite type over $\Z^2$ which is not surjunctive.
 \par  
 As mentioned in the introduction,  when $G$ is an amenable group (e.g., $G = \Z^d$),  it is known that every strongly irreducible subshift over $G$ is surjunctive 
(see \cite[Corollary 4.4]{cc-myhill}).
Weiss \cite[p. 358]{weiss-sgds} gave an example of a quiescent subshift of finite type over $\Z$ which is not surjunctive.
It consists of  all bi-infinite sequences of $0$'s,  $1$'s, and $2$'s in which only $00$, $01$, $11$, $12$, and $22$ are allowed to appear among the subwords of length $2$. 
 Gromov \cite{gromov-esav} and Weiss \cite{weiss-sgds} proved that if $G$ is a sofic group then the full shift $A^G$ is surjunctive for any finite alphabet $A$
 (see also \cite[Chapter 7]{ca-and-groups-springer} for an introduction to sofic groups and a presentation of the Gromov-Weiss theorem).
The class of sofic groups is known to be very large.
It includes in particular all residually amenable groups and hence all residually finite group and all amenable groups.  
Actually, no example of a non-sofic group has been found up to now, although
it is widely believed that such examples do exist (cf. \cite[p. 359]{weiss-sgds}). 
  On the other hand, it is unknown whether the full shift $A^G$ is surjunctive for any group $G$ and any finite alphabet $A$ (Gottschalk's conjecture).

\begin{question}
Let $G$ be a residually finite group (resp. a sofic group, resp. a group without any additional assumption).
Is every strongly irreducible subshift over $G$ surjunctive?
\end{question}

In Theorem \ref{t:benjy}, one cannot replace the hypothesis saying  that $X$ is strongly irreducible  by the weaker hypothesis that $X$ is topologically mixing.
Indeed, Petersen \cite{petersen} constructed a topologically mixing minimal subshift over $\Z$ and such a subshift cannot contain periodic configurations by minimality.
On the other hand, minimality clearly implies surjunctivity.
We are therefore  led to the following questions.

\begin{question}
Is every topologically mixing subshift over $\Z$ surjunctive?
\end{question}

\begin{question}
Does every strongly irreducible subshift over $\Z^2$ contain a dense set of periodic configurations ?
\end{question}

Boyle, Lind and Rudolph  \cite{boyle-lind-rudolph} proved that the automorphism group of every subshift of finite type over $\Z$ is residually finite.
They also gave an example   \cite[Example 3.9]{boyle-lind-rudolph}
of a minimal subshift over $\Z$ whose automorphism group is not residually finite (it contains a subgroup isomorphic to $\Q$).
Note that such a subshift contains no periodic configurations by minimality.
On the other hand,  Hochman \cite[Examples 1.4 and 1.5]{hochman-automorphism} gave  examples of a positive entropy subshift of finite type 
and of a topologically  mixing subshift of finite type, both over $\Z^2$,
whose automorphism groups are  not residually finite.
It seems that the following question is open. 

\begin{question}
Is there a strongly irreducible subshift over $\Z^2$ whose automorphism group is not residually finite?
\end{question}

Observe that a positive answer to Question 5 would give a negative answer to Question 4.


\end{document}